\declaretheorem[numberwithin=section]{theorem}
\newtheorem{proposition}[theorem]{Proposition}
\newtheorem{lemma}[theorem]{Lemma}
\newtheorem*{theo}{Theorem}
\theoremstyle{definition}
\newtheorem{definition}[theorem]{Definition}
\numberwithin{equation}{section}
\newcommand{\R}{\mathbb{R}}
\newcommand{\Z}{\mathbb{Z}}
\newcommand{\Bb}{\mathcal{B}}
\newcommand{\Dd}{\mathcal{D}}
\newcommand{\Ff}{\mathcal{F}}
\newcommand{\Ss}{\mathcal{S}}
\newcommand{\Int}{\mathrm{Int}}
\newcommand{\ca}{\hspace{-2pt}\raisebox{-2pt}{
\begin{tikzpicture} [scale=0.3]
 \draw (0,-0.2) -- (-0.4,1) (0.3,-0.2) -- (-0.1,1);
\end{tikzpicture}
}\hspace{-2pt}}
\definecolor{vert}{RGB}{0,205,0}
\begin{document}

\title{On diffeomorphisms of $4$--dimensional $1$--handlebodies}

\author{Delphine Moussard}

\begin{abstract}
 We give a new proof of Laudenbach and Poénaru's theorem, which states that any diffeomorphism of the boundary of a $4$--dimensional $1$--handlebody extends to the whole handlebody. Our proof is based on the cassification of Heegaard splittings of double handlebodies and a result of Cerf on diffeomorphisms of the $3$--ball. Further, we extend this theorem to $4$--dimensional compression bodies, namely cobordisms between $3$--manifolds constructed using only $1$--handles: when the negative boundary is a product of a compact surface by interval, we show that every diffeomorphism of the positive boundary extends to the whole compression body. This invlolves a strong Haken theorem for sutured Heegaard splittings and a classification of sutured Heegaard splittings of double compression bodies. Finally, we show how this applies to the study of relative trisection diagrams for compact $4$--manifolds.
\end{abstract}

\maketitle

\section{Introduction}

A famous theorem of Laudenbach and Poénaru asserts that every diffeomorphism of the boundary of a $4$--dimensional $1$--handlebody extends to a diffeomorphism of the whole handlebody \cite{LP}. This result is of great importance in the theory of smooth $4$--manifolds because it implies that, given a handle decomposition of a closed $4$--manifold $X$, the attaching information for the $1$-- and $2$--handles contained in a Kirby diagram is sufficient to determine $X$ up to diffeomorphism. Likewise, in the theory of trisections, it implies that a trisection diagram determines a unique closed $4$--manifold up to isotopy. We give here an alternative proof of Laudenbach--Poénaru's theorem, based on two main ingredients. The first one is the uniqueness of the minimal genus Heegaard splitting of the boundary of a $4$--dimensional $1$--handlebody, that is a double handlebody, due to Carvalho and Oertel \cite{CarOer}. The second ingredient is the fact that every diffeomorphism of a $3$--dimensional handlebody, that restricts to the identity on the boundary, is isotopic to the identity; this is based on a result of Cerf \cite{Cerf4}. From that point, our proof of Laudenbach--Poénaru's theorem is very short.

Carvalho and Oertel actually used much of the same machinery that Laudenbach and Poénaru used in their original proof. In particular, both papers relied on Laudenbach’s results from \cite{Laudenbach}. So at first glance this new proof might be regarded as a repackaging of Laudenbach and Poénaru’s original proof. However, in \cite{HS}, Hensel and Schultens reprove Carvalho--Oertel's result using brief cut and paste arguments. Thus, we believe that this proof of Laudenbach--Poénaru’s theorem represents a true simplification of the original.

We then extend the setting and consider $4$--dimensional compression bodies. Such a compression body is a cobordism between two $3$--manifolds, its negative boundary and its positive boundary, constructed using only $1$--handles. We further require that the negative boundary is a product of a compact surface and an interval. These compression bodies are the building blocks of the so-called relative trisections of compact $4$--manifolds with boundary. We generalize Laudenbach--Poénaru's theorem to compression bodies.

\begin{theo}[Theorem~\ref{th:LPrel}]
 Let $V$ be a $4$--dimensional compression body. Assume the negative boundary of $V$ is a product $P\times I$, where $P$ is a compact oriented surface which contains no $2$--sphere. Then every diffeomorphism of the positive boundary of $V$ extends to a diffeomorphism of $V$.
\end{theo}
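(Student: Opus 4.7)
The plan is to mirror the Laudenbach--Poénaru strategy outlined in the introduction, adapting both main ingredients to the sutured setting. The first ingredient is a classification of sutured Heegaard splittings of double compression bodies, asserting that minimal-complexity splittings are unique up to isotopy; this is the second main result announced in the abstract. The second ingredient is a relative analog of Cerf's theorem stating that a self-diffeomorphism of a $3$--dimensional compression body which is the identity on the negative boundary and isotopic to the identity on the positive boundary is isotopic to the identity relative to the negative boundary.

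Given $\phi \in \mathrm{Diff}(\partial_+ V)$, I would first form the double $DV := V \cup_{\partial_+ V} V$ by gluing two copies of $V$ along the identity of $\partial_+ V$, smoothing corners so that $\partial_- DV = P \times [-1,1]$. The central copy of $\partial_+ V$ is then a sutured Heegaard surface of $DV$, and gluing instead along $\phi$ produces a second sutured Heegaard surface of the same abstract double compression body. By the uniqueness statement together with a strong Haken theorem in the sutured setting, these two splittings would be ambient isotopic in $DV$. Promoting this isotopy to a self-diffeomorphism $\Psi$ of $DV$ that preserves the two halves setwise and, after a further correction via isotopy extension in a bicollar of the central surface, restricts exactly to $\phi$ on it, the restriction of $\Psi$ to either copy of $V$ is a self-diffeomorphism of $V$ extending $\phi$. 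The relative Cerf-type statement above would be used to ensure that $\Psi$ can be chosen to preserve the decomposition into the two copies of $V$, which is the point where an honest compression body (rather than a handlebody) argument is needed.

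The main obstacle I foresee is establishing the uniqueness of sutured Heegaard splittings of double compression bodies, together with the sutured strong Haken theorem and the control needed to track the product structure on the negative boundary throughout the argument. In the closed case, Hensel--Schultens give a clean cut-and-paste proof of Carvalho--Oertel's theorem; in the sutured setting, one must additionally ensure that compressing disks can be made disjoint from the product structure $P \times I$ and that isotopies of Heegaard surfaces can be chosen to preserve the sutures setwise. This appears to require genuinely new arguments beyond the closed case, whereas the relative Cerf-type ingredient should follow from standard results on diffeomorphisms of $3$--dimensional handlebodies applied in a parametrized form to $\partial P \times I^2$.
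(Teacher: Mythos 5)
Your proposal correctly identifies the two ingredients (the classification of sutured Heegaard splittings of double compression bodies, and a Cerf-type statement for $3$--dimensional compression bodies), but the way you deploy the first one contains a dimensional error that breaks the argument. You form the double $DV=V\cup_{\partial_+V}V$, which is a $4$--manifold, and you propose to treat the $3$--manifold $\partial_+V$ as a ``sutured Heegaard surface'' of $DV$ and to invoke uniqueness of such splittings. But the classification theorem in question is a statement about \emph{surfaces} splitting \emph{$3$--manifolds} into two $3$--dimensional compression bodies; it says nothing about decompositions of a $4$--manifold into two $4$--dimensional compression bodies along a $3$--manifold. No uniqueness theorem for the latter kind of splitting is available, and such a theorem would be at least as hard as the statement you are trying to prove, since it already presupposes control over diffeomorphisms of $4$--dimensional compression bodies rel their positive boundary; the step is essentially circular. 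There is also a prior issue: you would first need to know that $V\cup_\phi V$ is diffeomorphic to $V\cup_{\mathrm{id}}V$ as a sutured $4$--manifold at all, which is again close to the theorem being proved.

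The correct use of the classification is one dimension down. The compression body $V$ is a product $C\times I$ (with vertical boundary collapsed), where $C$ is a $3$--dimensional compression body; hence $\partial_+V$ \emph{is itself} a double compression body $C_0\cup_\Sigma C_1$, and the surface $\Sigma=\partial_+C_t$ is a minimal genus sutured Heegaard surface of the $3$--manifold $\partial_+V$. A diffeomorphism $\varphi$ of $\partial_+V$ carries this splitting to another one of the same genus, and the classification theorem for $3$--manifolds lets you isotope $\varphi$ so that it preserves $\Sigma$, $C_0$ and $C_1$. Then $\varphi|_{C_0}$ and $\varphi|_{C_1}$ are two diffeomorphisms of $C$ agreeing on $\partial_+C$, so by the Cerf-type lemma --- whose correct form is that a diffeomorphism of $C$ which is the identity on $\partial_+C$ is isotopic to the identity, with no hypothesis or conclusion relative to $\partial_-C$, unlike the version you state --- they are joined by an isotopy $\varphi_t$, and $(x,t)\mapsto(\varphi_t(x),t)$ is the desired extension over $C\times I=V$. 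No doubling of $V$ is needed.
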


From this result, we recover the statement, due to Castro, Gay and Pinz\'on-Caicedo \cite{CGPC2}, that every relative trisection diagram determines a unique compact $4$--manifold up to diffeomorphism, and we extend it to the case when the page of the trisection (the surface $P$) is allowed to contain closed components. This fails when $P$ is allowed to contain $2$--spheres, as we show with an example which was communicated to us by David Gay.

The proof of Theorem~\ref{th:LPrel} follows the lines of our proof of Laudenbach--Poénaru's theorem. The second ingredient is easily adapted to the relative case: given a $3$--dimensional compression body $C$, namely a cobordism between two compact surfaces constructed using only $1$--handles, we show that every diffeomorphism of $C$, which restricts to the identity on its positive boundary, is isotopic to the identity. The first ingredient requires more work. The positive boundary of a $4$--dimensional compression body is diffeomorphic to the connected sum of some products $F\times I$, where $F$ is a compact surface, and some copies of $S^1\times S^2$. We need to understand the sutured Heegaard splittings of these so-called double compression bodies.

\begin{theo}[Theorem~\ref{th:doublecompHSfull}]
 Any two sutured Heegaard splittings of a double compression body with the same genus are isotopic.
\end{theo}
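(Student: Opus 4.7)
The plan is to adapt the Carvalho--Oertel/Hensel--Schultens strategy for double handlebodies to the sutured setting, using the connected sum decomposition of a double compression body $D$ as $(P\times I)\,\#_k(S^1\times S^2)$ realized by a reducing sphere system $\Ss$. The argument will proceed in three stages: first, establish uniqueness of sutured Heegaard splittings of $D$ at minimal genus; second, show that every higher-genus sutured Heegaard splitting is a stabilization of a minimal-genus one; third, invoke uniqueness of stabilizations to conclude the theorem at arbitrary fixed genus.

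For the first stage, given two minimal-genus sutured Heegaard splittings $\Sigma_1$ and $\Sigma_2$ of $D$, I would apply the strong Haken theorem for sutured Heegaard splittings (the other main technical ingredient developed in the paper, as flagged in the introduction) to simultaneously isotope $\Ss$ so that each sphere of $\Ss$ meets each $\Sigma_i$ transversely in a single essential simple closed curve. Cutting $D$ along $\Ss$ then reduces the problem to two base cases: sutured Heegaard splittings of the product $P\times I$, where the minimal splitting is just the product structure, and sutured Heegaard splittings of several $3$--balls with the induced sutures, which reduces to Waldhausen's classical uniqueness for splittings of $S^3$. The local isotopies on the pieces can be reassembled into a global isotopy of $D$ using Cerf's theorem on diffeomorphisms of the $3$--ball, already invoked in the first half of the paper. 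For the second and third stages, I would use a sutured Reidemeister--Singer argument: any higher-genus splitting admits a cancelling pair and hence destabilizes to a minimal-genus one, and any two stabilizations of the same splitting by the same number of handles are isotopic by a standard disc-exchange argument adapted to sutures.

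The main obstacle will be the strong Haken step: one needs to simultaneously put \emph{both} splittings $\Sigma_1$ and $\Sigma_2$ in standard position with respect to an entire system $\Ss$ of reducing spheres, while respecting the sutured structure. The classical Haken theorem isolates only one reducing sphere at a time and is oblivious to sutures, so genuinely new work is required there (carried out, presumably, as a separate theorem in the paper). Once that tool is available, the reduction to base pieces, the uniqueness on $P\times I$ and $B^3$, and the gluing via Cerf's theorem are routine sutured adaptations of classical $3$--manifold Heegaard theory, and the destabilization/stabilization-uniqueness arguments of stages two and three proceed along well-established lines.
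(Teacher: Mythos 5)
Your overall architecture (reduce along reducing spheres via the strong Haken theorem, settle the minimal-genus case on the pieces, then handle higher genus by destabilization plus uniqueness of stabilization) parallels the paper's, but Stage 2 contains a genuine gap. The assertion that every sutured Heegaard splitting of genus $n>g(P)+k$ ``admits a cancelling pair and hence destabilizes'' does not follow from a Reidemeister--Singer argument: Reidemeister--Singer only produces a \emph{common stabilization} of two given splittings, and the statement that every non-minimal-genus splitting is stabilized is false for general $3$--manifolds (Casson--Gordon examples). For double compression bodies it is true, but proving it is the real content of the higher-genus case. The paper obtains it by cutting along Haken spheres to reduce to splittings of products $F\times I$, capping off the resulting boundary spheres, and then invoking the Scharlemann--Thompson theorem that every non-minimal-genus Heegaard splitting of $(\mathrm{surface})\times I$ is stabilized (extended in the paper to surfaces with boundary), together with Waldhausen's theorem for the genus-$0$ summands. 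Your proposal never brings in this input, and without it Stage 2 does not go through.

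A secondary problem is in Stage 1: you propose to isotope the entire reducing sphere system $\Ss$ so that each of its spheres is \emph{simultaneously} Haken for both $\Sigma_1$ and $\Sigma_2$. The strong Haken theorem, including the sutured version proved in the paper, positions spheres with respect to a single Heegaard splitting; an isotopy making $\Ss$ Haken for $\Sigma_2$ will in general destroy its Haken position relative to $\Sigma_1$, and no simultaneous version is available or needed. The paper instead argues one splitting at a time: it finds a Haken sphere for the given splitting, cuts, observes that the induced splitting of the cut-open manifold has strictly smaller complexity $(g(\Sigma),|\partial\Sigma|)$, and inducts, with the base cases being the punctured products $P\times I$ of Proposition~\ref{prop:HSpuncproduct}. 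Note also that the uniqueness on those base pieces is established by an arc-sliding argument, not by Cerf's theorem, which enters only in the $4$--dimensional part of the paper; and your Stage 3 (uniqueness of stabilization) is indeed the standard fact the paper implicitly uses to pass from ``stabilized'' to ``unique at each genus''.
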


This will essentially follow from the strong Haken theorem due to Scharlemann \cite{Scharlemann}: every $2$--sphere embedded in a $3$--manifold equipped with a Heegaard splitting is isotopic to a $2$--sphere which intersects the Heegaard surface along a single circle. Another proof of this result was given by Hensel and Schultens in \cite{HS}, which appears surprisingly simple to us. We apply their technique to check that the strong Haken theorem remains true in the setting of sutured Heegaard splittings.

To prove our relative Laudenbach--Poénaru's theorem, we only need the uniqueness of the minimal genus Heegaard splittings of double compression bodies. However, the general classification result is also useful in the theory of relative trisections. In the original definition of a relative trisection, one has to precisely describe a decomposition of the boundary of the building blocks, namely the $4$--dimensional compression bodies, in order to prescribe the way the different pieces should meet. We give a simpler definition of a relative trisection, and Theorem~\ref{th:doublecompHSfull} shows that the two definitions are equivalent.

\subsection*{Plan of the paper}
In Section~\ref{secLP}, we reprove Laudenbach--Poénaru's theorem. In Section~\ref{secHS}, we define sutured Heegaard splittings, we give a proof of the strong Haken theorem in this setting, and we classify the sutured Heegaard splittings of double compression bodies. In Section~\ref{secLPrel}, we prove Theorem~\ref{th:LPrel}, we apply it to the study of relative trisection diagrams, and we discuss its failure when there is a $2$--sphere in the page $P$.

\subsection*{Conventions}
The boundary of an oriented manifold with boundary is oriented using the outward normal first convention. 
If $M$ is an oriented manifold, $-M$ represents $M$ with the opposite orientation. 
If $M$ is a compact manifold and $N$ is a submanifold, we denote by $M\ca N$ the manifold ``$M$ cut along $N$'', which comes with a surjective map $\pi:M\ca N\to M$ such that $\pi$ is a diffeomorphism from $\pi^{-1}(M\setminus N)$ to $M\setminus N$ and a double cover from $\pi^{-1}(N)$ to $N$.

\subsection*{Acknowledgements}
I warmly thank Trenton Schirmer for many helpful conversations and for valuable comments on the first version of the paper. I am also grateful to David Gay for helpful conversations.

\section{Proof of Laudenbach--Poénaru's theorem via Heegaard splittings}
\label{secLP}

A {\em genus--$g$ handlebody} is a $3$--manifold diffeomorphic to a $3$--ball with $g$ $1$--handles glued on its boundary. A {\em Heegaard splitting} of a closed $3$--manifold $M$ is a decomposition $M=H_1\cup_\Sigma H_2$, where $H_1$ and $H_2$ are handlebodies and $\Sigma=\partial H_1=-\partial H_2$. 
A {\em genus--$g$ double handlebody} is a $3$--manifold diffeomorphic to~$\sharp_{i=1}^g(S^1\times S^2)$ (it is the result of gluing of two copies of a genus--$g$ handlebody along their boundary {\em via} the identity map). 

The first preliminary result we need to reprove Laudenbach--Poénaru's theorem is the classification of minimal genus Heegaard splittings of double handlebodies. This result is recovered in Theorem~\ref{th:doublecompHS} within the more general setting of double compression bodies.

\begin{theorem}[Carvalho--Oertel] \label{th:CO}
 A genus--$g$ double handlebody admits a unique genus--$g$ Heegaard splitting, up to isotopy.
\end{theorem}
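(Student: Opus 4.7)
The plan is to prove the statement by induction on the genus $g$, using Scharlemann's strong Haken theorem (mentioned earlier in the excerpt) as the main engine to reduce to smaller double handlebodies. The base case $g=0$ is $M=S^3$ and follows from Alexander's theorem: any embedded $2$--sphere in $S^3$ bounds two $3$--balls, and any two such spheres are ambient isotopic.

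For the inductive step, let $M=\sharp_{i=1}^g(S^1\times S^2)$ with $g\ge 1$ and let $\Sigma,\Sigma'$ be two genus-$g$ Heegaard surfaces. I would first fix a non-separating embedded $2$--sphere $S\subset M$ and apply the strong Haken theorem to isotope $S$ so that $S\cap\Sigma$ is a single essential simple closed curve $c\subset\Sigma$. Using Laudenbach's theorem that any two non-separating $2$--spheres in $\sharp_g(S^1\times S^2)$ are ambient isotopic, after a further ambient isotopy I may assume that this very same $S$ meets $\Sigma'$ in a single essential simple closed curve $c'\subset\Sigma'$. Then $S$ cuts each of the two handlebodies bounded by $\Sigma$ (respectively $\Sigma'$) along a properly embedded essential disk.

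Now cut $M$ along $S$: the resulting manifold $M_S$ has two $2$--sphere boundary components, and capping these off by $3$--balls produces the double handlebody $\sharp_{g-1}(S^1\times S^2)$. Cutting $\Sigma$ along $c$ yields a genus-$(g-1)$ surface with two boundary circles, which I would cap off by two disks inside the two $3$--balls to get a closed genus-$(g-1)$ surface $\bar\Sigma$; the two meridian disks in which $S$ cut the handlebodies of $\Sigma$ become part of new complete disk systems, so $\bar\Sigma$ is a genus-$(g-1)$ Heegaard surface of $\sharp_{g-1}(S^1\times S^2)$. The same construction gives $\bar\Sigma'$. By induction there is an ambient isotopy of $\sharp_{g-1}(S^1\times S^2)$ taking $\bar\Sigma$ to $\bar\Sigma'$, and I would arrange this isotopy to preserve the two capping balls setwise. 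By Cerf's theorem on diffeomorphisms of the $3$--ball (which is used elsewhere in the paper), the isotopy can be further adjusted to be the identity on those balls, hence descends to $M_S$ and reglues to an ambient isotopy of $M$ carrying $\Sigma$ to $\Sigma'$.

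The main obstacle, in my view, is producing a single sphere $S$ that intersects both $\Sigma$ and $\Sigma'$ in a single essential curve each, since the strong Haken theorem as quoted speaks of one Heegaard surface at a time. The quickest way is to rely on Laudenbach's uniqueness of non-separating spheres in $\sharp_g(S^1\times S^2)$ up to ambient isotopy, but one could also avoid this by proving a two-surface version of strong Haken using the same innermost-disk techniques referenced in Hensel--Schultens. A secondary technicality is the bookkeeping in the cut-and-cap step, namely checking that isotopies of the lower-genus splitting glue back correctly to isotopies of $M$ fixing a neighborhood of $S$.
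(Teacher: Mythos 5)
Your overall strategy (strong Haken to put a nonseparating sphere in one-circle position with respect to the splitting, cut, induct) is the same as the paper's, which derives this statement as the $P=\emptyset$ case of Theorem~\ref{th:doublecompHS}. The step you dismiss as ``a secondary technicality,'' however, is where the actual content lies, and as written it has a genuine gap. After cutting along $S$ and capping with balls $B_1,B_2$, your inductive hypothesis only gives \emph{some} ambient isotopy of $\sharp_{g-1}(S^1\times S^2)$ carrying $\bar\Sigma$ to $\bar\Sigma'$; it gives no control whatsoever over where that isotopy sends $B_1,B_2$ or the capping disks $D_i=B_i\cap\bar\Sigma$. To descend to $M_S$ and reglue you must upgrade this to an isotopy that is the identity on $B_1\cup B_2$ (and in particular matches the disks $D_i$ with $D_i'$), which is a statement about the uniqueness of the splitting \emph{relative to} the capping balls, i.e.\ a uniqueness statement for splittings of a punctured manifold with prescribed boundary behaviour. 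Cerf's theorem does not supply this: it concerns diffeomorphisms of $B^3$ that are already the identity on $\partial B^3$, whereas your difficulty is to arrange that the isotopy preserves the balls and the disks in the first place. Said differently, the closed, absolute form of the inductive hypothesis is too weak to run the induction; you need a relative form, and your base case (Alexander's theorem for $S^3$) does not cover the relative base case either.

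This is exactly what the paper's sutured formalism is for. There, cutting along a Haken sphere does not cap off: it produces a \emph{sutured} Heegaard splitting of the punctured manifold, with $S\cap\Sigma$ added to the suture, and the induction bottoms out at Proposition~\ref{prop:HSpuncproduct} --- the uniqueness of the genus--$0$ sutured splitting of a multiply punctured $S^3$ (equivalently \cite[Theorem~3.3]{HS}). That proposition is proved by a genuinely separate argument (uniqueness up to isotopy of the system of arcs whose neighbourhood is the compression body, using crossing changes through disks parallel to the suture components), and it is precisely the relative statement your cut-and-reglue step needs. A minor further remark: your appeal to Laudenbach's uniqueness of nonseparating spheres to get $S$ simultaneously Haken for $\Sigma$ and $\Sigma'$ is unnecessary --- applying strong Haken to $(\Sigma',S)$ and pushing the resulting ambient isotopy onto $\Sigma'$ instead of $S$ already does it --- so the ``main obstacle'' you identify is not the real one; the real one is the relative uniqueness after cutting.
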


The second preliminary result is based on the following theorem of Cerf \cite{Cerf4}. 

\begin{theorem}[Cerf]
 Every diffeomorphism of a $3$--ball, which is the identity on the boundary, is isotopic to the identity, relative to the boundary.
\end{theorem}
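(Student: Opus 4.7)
\emph{Proof sketch.} This is a celebrated theorem of Cerf; I will sketch the overall strategy rather than attempt a self-contained argument. By a doubling construction, the stated result is essentially equivalent to $\Gamma_4 := \pi_0(\mathrm{Diff}^+(S^3)) = 0$, and it is the latter form that Cerf actually establishes in \cite{Cerf4}. My plan is therefore: (i) reduce to $\Gamma_4 = 0$ by doubling, (ii) promote the resulting isotopy on $S^3$ to one fixing a chosen ball, and (iii) acknowledge that step (ii)'s input is the genuine hard part.

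Given $\phi \in \mathrm{Diff}(B^3, \partial B^3)$, an initial isotopy rel $\partial$ lets me assume $\phi$ is the identity on a collar neighborhood of $\partial B^3$. Writing $S^3 = B^3 \cup_{S^2} B'$ as the union of two complementary $3$-balls glued along $S^2$, I extend $\phi$ to a diffeomorphism $\widehat\phi$ of $S^3$ by setting $\widehat\phi = \mathrm{id}$ on $B'$; the collar assumption makes $\widehat\phi$ smooth. Applying $\Gamma_4 = 0$ then yields a smooth isotopy $\Phi_t : S^3 \to S^3$ from $\widehat\phi$ to $\mathrm{id}_{S^3}$.

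To recover an isotopy of $\phi$ rel $\partial B^3$, I need to modify $\Phi_t$ so that it fixes $B'$ pointwise throughout. The family $t \mapsto \Phi_t(B')$ is a loop in the embedding space $\mathrm{Emb}(B^3, S^3)$ based at the inclusion, and one can trivialize it by composing $\Phi_t$ with a suitable ambient isotopy (via the isotopy extension theorem) provided the loop is null-homotopic. This null-homotopy follows from $\Gamma_4 = 0$ combined with Smale's classical theorem $\mathrm{Diff}(S^2) \simeq O(3)$, through a standard fibration argument relating $\mathrm{Diff}(S^3)$, $\mathrm{Diff}(S^3, B')$ and $\mathrm{Emb}(B^3, S^3)$. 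After this modification, the restriction to $B^3$ gives an isotopy of $\phi$ to the identity fixing $\partial B^3$.

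The main obstacle is of course the input $\Gamma_4 = 0$ itself. Cerf's proof follows a generic one-parameter family of Morse functions on $S^3$ interpolating between $f \circ \widehat\phi$ and a reference height function $f$, and carries out a detailed analysis of the birth--death and independent-trajectory catastrophes encountered along the path; these can be arranged to cancel in pairs, reducing $\widehat\phi$ to a composition of elementary moves that are visibly isotopic to the identity. This ``catastrophe theory in function space'' is the technical heart of the argument and, at least in dimension three, admits no substantially simpler route short of invoking Hatcher's stronger Smale conjecture $\mathrm{Diff}(S^3) \simeq O(4)$, which subsumes both $\Gamma_4 = 0$ and the embedding-space input used in step (ii).
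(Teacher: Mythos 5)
The paper does not prove this statement at all: it is imported verbatim as Cerf's theorem from \cite{Cerf4} and used as a black box (its only role is as input to Lemma~\ref{lemma:diffeohandlebody}), so there is no argument of the paper's to compare yours against. Your sketch is a correct account of the standard dictionary between the equivalent formulations ($\Gamma_4=0$, $\pi_0\,\mathrm{Diff}^+(S^3)=0$, and $\pi_0\,\mathrm{Diff}(B^3,\partial B^3)=0$), together with an honest deferral of the analytic core to Cerf's one-parameter Morse theory; citing the result, as the paper does, would have sufficed. One small correction to your step (ii): you do not need the loop $t\mapsto\Phi_t|_{B'}$ to be null-homotopic in $\mathrm{Emb}(B^3,S^3)$. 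In the restriction fibration $\mathrm{Diff}(S^3,\,B'\ \mathrm{ptwise})\to\mathrm{Diff}(S^3)\to\mathrm{Emb}(B^3,S^3)$ the class of $\widehat\phi$ in $\pi_0$ of the fiber equals the image of $[\,t\mapsto\Phi_t|_{B'}]$ under the connecting map, so it suffices that this connecting homomorphism vanish; and it does, because $\mathrm{Emb}(B^3,S^3)$ deformation retracts (by taking the derivative at the center) onto the frame bundle of $S^3$, so every loop of embeddings based at the inclusion is homotopic to a loop of rotations and hence lifts to a loop in $\mathrm{Diff}(S^3)$. This uses only the derivative retraction, not Smale's $\mathrm{Diff}(S^2)\simeq O(3)$. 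With that adjustment your outline is the standard and correct one.
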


We shall generalize this fact to handlebodies of positive genus. 
A {\em defining disk system} for a $3$--dimensional handlebody $H$ is a union $\Dd$ of disjoint properly embedded disks such that $H\ca\Dd$ is a $3$--ball.

\begin{lemma}
 Let $H$ be a $3$--dimensional handlebody. Every two defining disk systems for $H$ which coincide on the boundary are isotopic.
\end{lemma}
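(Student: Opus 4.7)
The plan is to carry out a standard innermost-circle argument exploiting the irreducibility of handlebodies, and then reduce to the fact that properly embedded disks in a $3$--ball are isotopic to disks in the boundary sphere.

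First, using an isotopy of $\Dd_2$ supported in a collar neighborhood of $\partial H$, I would arrange that $\Dd_1$ and $\Dd_2$ coincide on a thin collar of their common boundary $\partial \Dd_1 = \partial \Dd_2$ in $H$. After perturbing to transverse position in the interior, $\Dd_1 \cap \Dd_2$ in $\Int(H)$ is a finite disjoint union of circles.

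Next, I would iteratively remove these circles. A circle $\gamma$ innermost on some component $D_1^i \subset \Dd_1$ bounds a subdisk $\Delta_1 \subset D_1^i$ whose interior is disjoint from $\Dd_2$, and it also bounds a subdisk $\Delta_2$ in some component of $\Dd_2$. The sphere $\Delta_1 \cup \Delta_2$ lies in $\Int(H)$ and, since handlebodies are irreducible, it bounds a $3$--ball in $H$; an ambient isotopy supported near this ball pushes $\Delta_2$ past $\Delta_1$ and strictly decreases the number of intersection circles. After finitely many such moves, $\Dd_1$ and $\Dd_2$ meet only along their common boundary collar.

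In the final step, I would cut $H$ along $\Dd_1$ to obtain a $3$--ball $B$. Each component $D_2^i$ of $\Dd_2$ descends to a properly embedded disk in $B$ whose boundary is a circle on $\partial B \cong S^2$ bounding, in $\partial B$, a copy of the corresponding $D_1^i$. Since a properly embedded disk in a $3$--ball separates it into two $3$--balls (Alexander's theorem), $D_2^i$ and this copy of $D_1^i$ cobound a $3$--ball in $B$, which yields an isotopy of $D_2^i$ onto $D_1^i$ rel boundary. Performing these isotopies for the disjoint components of $\Dd_2$ one at a time, choosing at each step a component cobounding an innermost ball, and lifting back to $H$ yields the required ambient isotopy from $\Dd_2$ to $\Dd_1$.

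The main obstacle I anticipate is the bookkeeping in the final step: one has to check that, under the cut along $\Dd_1$, the boundary of each $D_2^i$ lands on the circle bounding the correct copy of $D_1^i$ on $\partial B$, so that the isotopy inside $B$ lifts to an honest isotopy in $H$ taking $D_2^i$ onto $D_1^i$. The collar coincidence arranged at the outset makes this true, but careful attention to sides and to preserving disjointness through successive isotopies is needed.
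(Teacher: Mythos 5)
Your argument is correct and is precisely the ``standard innermost disk argument, using the fact that handlebodies are irreducible'' that the paper's one-line proof invokes: remove intersection circles via irreducibility, then finish in the ball obtained by cutting along one system. The only detail worth flagging is that in the innermost-circle step the ball bounded by $\Delta_1\cup\Delta_2$ may contain other pieces of the disk systems, so one should choose an innermost such ball (or surger rather than push) to make the reduction of intersection circles clean.
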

\begin{proof}
 This follows from a standard innermost disk argument, using the fact that handlebodies are irreducible.
\end{proof}

\begin{lemma} \label{lemma:diffeohandlebody}
 Let $H$ be a $3$--dimensional handlebody. Let $\varphi$ be a diffeomorphism of $H$. If $\varphi$ is the identity on $\partial H$, then $\varphi$ is isotopic to the identity, relative to the boundary.
\end{lemma}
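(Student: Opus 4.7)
The plan is to reduce the statement to Cerf's theorem on the $3$--ball by first isotoping $\varphi$ so that it fixes a defining disk system of $H$, and then cutting along this system.

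First I would fix a defining disk system $\Dd$ for $H$. Since $\varphi$ is the identity on $\partial H$, the image $\varphi(\Dd)$ is another defining disk system for $H$ that coincides with $\Dd$ on the boundary. By the previous lemma, $\Dd$ and $\varphi(\Dd)$ are isotopic; a straightforward refinement of the innermost disk argument (keeping track of what happens near $\partial H$) shows that this isotopy can be taken to be rel $\partial H$. Extending it to an ambient isotopy of $H$ rel $\partial H$ and composing with $\varphi$, we may assume that $\varphi(\Dd)=\Dd$ setwise.

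Next I would make $\varphi$ fix $\Dd$ pointwise. Each disk $D$ of $\Dd$ is sent by $\varphi$ to itself by a diffeomorphism that is the identity on $\partial D$. By the Alexander trick for the $2$--disk, this restriction is isotopic to the identity rel $\partial D$; doing this on a collar of $D$ in $H$, we can extend these disk-level isotopies to an ambient isotopy of $H$ supported in a neighborhood of $\Dd$ and fixed on $\partial H$. After this further adjustment, $\varphi$ is the identity on $\partial H \cup \Dd$.

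At this point $\varphi$ descends to a diffeomorphism $\tilde\varphi$ of $H\ca\Dd$, which is a $3$--ball, and $\tilde\varphi$ is the identity on its entire boundary. Cerf's theorem then provides an isotopy of $\tilde\varphi$ to the identity rel boundary, which descends to an isotopy of $\varphi$ to the identity on $H$. The main obstacle in this plan is a careful one: ensuring at each stage that the isotopies produced (for the defining disk systems, and from the Alexander trick on each disk) can be arranged to be rel $\partial H$, so that the successive modifications of $\varphi$ never disturb the boundary condition which Cerf's theorem ultimately consumes.
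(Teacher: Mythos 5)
Your proposal is correct and follows essentially the same route as the paper's proof: isotope $\varphi$ to fix a defining disk system using the uniqueness lemma and the isotopy extension theorem, then make it the identity on the disks using the standard fact about diffeomorphisms of $D^2$ rel boundary, cut along $\Dd$ to obtain a $3$--ball, and conclude with Cerf's theorem. Your explicit attention to keeping all isotopies rel $\partial H$ is a point the paper treats more tersely, but the argument is the same.
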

\begin{proof}
 Pick a defining disk system $\Dd$ for $H$. Then $\varphi(\Dd)$ is another defining disk system with the same boundary, thus isotopic to $\Dd$. By the isotopy extension theorem, there is an ambient isotopy sending $\varphi(\Dd)$ to $\Dd$, keeping $\partial H$ fixed. Hence, up to isotoping $\varphi$, we can assume that $\varphi(\Dd)=\Dd$. Now, every diffeomorphism of a $2$--disk, which is the identity on the boundary, is isotopic to the identity (Smale, see \cite[p.132]{Cerf4}). 
 Hence we can even assume that $\varphi$ is the identity on $\partial H\cup\Dd$. We are led to a diffeomorphism of a $3$--ball which is the identity on the boundary, and we apply Cerf's result. 
\end{proof}

A {\em $4$--dimensional $1$--handlebody} is a compact oriented smooth $4$--manifold obtained from a $4$--ball by adding a finite number of $1$--handles. The number of $1$--handles glued is the {\em genus} of the handlebody. Note that the boundary of a $4$--dimensional $1$--handlebody of genus $g$ is a genus--$g$ double handlebody.
 
\begin{theorem}[Laudenbach--Poénaru] \label{th:LP}
 Let $Z$ be a $4$--dimensional $1$--handlebody. Then every diffeomorphism of $\partial Z$ extends to a diffeomorphism of $Z$.
\end{theorem}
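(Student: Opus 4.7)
The plan uses both preliminary results highlighted in the introduction: Theorem~\ref{th:CO} (Carvalho--Oertel) on Heegaard splittings, and Lemma~\ref{lemma:diffeohandlebody} (based on Cerf). The starting point is the observation that, since a $4$-dimensional $1$-handle $D^1 \times D^3$ factors as $(D^1 \times D^2) \times I$, any $4$-dimensional $1$-handlebody $Z$ of genus $g$ is diffeomorphic to $H \times I$ where $H$ is a $3$-dimensional handlebody of genus $g$. This endows $\partial Z \cong \partial(H \times I)$ with a canonical genus-$g$ Heegaard splitting: take the Heegaard surface $\Sigma := \partial H \times \{1/2\}$, with handlebody sides $H_- := H \times \{0\} \cup \partial H \times [0, 1/2]$ and $H_+ := H \times \{1\} \cup \partial H \times [1/2, 1]$, each diffeomorphic to $H$.

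Given a diffeomorphism $\varphi$ of $\partial Z$, Theorem~\ref{th:CO} produces an isotopy in $\partial Z$ carrying $\varphi(\Sigma)$ onto $\Sigma$. Extending this isotopy to $Z$ via the collar of $\partial Z$ and composing with $\varphi$, we may assume $\varphi(\Sigma) = \Sigma$. Since the involution $\iota(x, t) := (x, 1-t)$ is a diffeomorphism of $Z$ that swaps $H_-$ and $H_+$ on $\partial Z$, precomposing $\varphi$ by $\iota|_{\partial Z}$ if necessary allows us to assume $\varphi(H_\pm) = H_\pm$.

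Next I would extend $\varphi|_{H_-}$ to a diffeomorphism $\Phi$ of $Z$. For this, re-parameterize $Z$ as $H_- \times I$ in such a way that $H_- \subset \partial Z$ corresponds to $H_- \times \{0\}$; such a re-parameterization can be obtained by taking any abstract diffeomorphism $Z \cong H_- \times I$ (which exists since both are $4$-dimensional $1$-handlebodies of genus $g$) and using Theorem~\ref{th:CO} inside $\partial(H_- \times I)$ together with a sliding isotopy in the $\partial H_- \times I$ factor to bring the image of $H_-$ onto $H_- \times \{0\}$. Under this identification, the formula $\Phi(y, s) := (\varphi|_{H_-}(y), s)$ defines the desired extension. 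I expect this re-parameterization to be the main technical point.

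The diffeomorphism $\Phi$ then agrees with $\varphi$ on $H_-$ (and hence on $\Sigma$), but $\Phi|_{H_+}$ may differ from $\varphi|_{H_+}$. The composition $\beta := (\Phi|_{H_+})^{-1} \circ \varphi|_{H_+}$ is a diffeomorphism of $H_+$ which is the identity on $\partial H_+ = \Sigma$, and hence by Lemma~\ref{lemma:diffeohandlebody} is isotopic to the identity relative to $\Sigma$ through an isotopy $\beta_t$. Given a one-sided collar $c: H_+ \times [0, 1] \hookrightarrow Z$ of $H_+$ in $Z$ with $c(y, 0) = y$, set $\Phi'(c(y, s)) := c(\beta_{1-s}(y), s)$ on the collar and $\Phi' := \mathrm{id}$ elsewhere; smoothness near $\Sigma$ follows from $\beta_t$ fixing $\Sigma$ pointwise. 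Then $\Phi'|_{H_-} = \mathrm{id}$ and $\Phi'|_{H_+} = \beta$, so $\Phi \circ \Phi'$ restricts to $\varphi$ on $\partial Z$, completing the extension.
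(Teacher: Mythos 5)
Your proof is correct and follows essentially the same route as the paper: identify $Z$ with a (lensed) product $H\times I$, use Theorem~\ref{th:CO} to arrange that $\varphi$ preserves the induced Heegaard splitting of $\partial Z$, and use Lemma~\ref{lemma:diffeohandlebody} to resolve the discrepancy between the two handlebody sides. The only difference is cosmetic: the paper interpolates directly between $\varphi|_{H_0}$ and $\varphi|_{H_1}$ across the product via $(x,t)\mapsto(\varphi_t(x),t)$, whereas you extend $\varphi|_{H_-}$ as a product map and absorb the correction $\beta$ into a collar of $H_+$; note also that your re-parameterization step is unnecessary, since the original product structure already exhibits $H_-$ as a fiber.
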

\begin{proof}
 Fix an identification of $Z$ with a product $H\times I$, with $H$ a $3$--dimensional genus--$g$ handlebody, where the vertical boundary $\partial H\times I$ has been collapsed along the $I$--factor. This induces a foliation of $Z$ by $3$--dimensional handlebodies $H_t$, $t\in[0,1]$, which meet exactly along their common boundary $\Sigma=\partial H_t$. This surface $\Sigma$ defines a minimal genus Heegaard splitting of $\partial Z$.
 
 Now take a diffeomorphism $\varphi$ of $\partial Z$. It sends the Heegaard splitting $\partial Z=H_0\cup_\Sigma H_1$ onto another splitting $\partial Z=\varphi(H_0)\cup_{\varphi(\Sigma)} \varphi(H_1)$, with the same genus. By Theorem~\ref{th:CO}, they are isotopic. Hence, realizing the isotopy in a collar neighborhood of $\partial Z$, we can assume that $\varphi(\Sigma)=\Sigma$ and $\varphi(H_t)=H_t$ for each $t=0,1$. 
 
 Now $\varphi_{|H_0}$ and $\varphi_{|H_1}$ define two diffeomorphisms of $H$ which coincide on the boundary. By Lemma~\ref{lemma:diffeohandlebody}, there is an isotopy $\varphi_t$ of diffeomorphisms of $H$ from $\varphi_{|H_0}$ to $\varphi_{|H_1}$. The map $\phi:Z\to Z$ induced by the diffeomorphism $(x,t)\mapsto (\varphi_t(x),t)$ of $H\times I$ is the desired diffeomorphism.
\end{proof}

\section{Sutured Heegaard splittings}
\label{secHS}

\subsection{Compression bodies}

\begin{definition}
 A \emph{compression body} $C$ is a cobordism from a connected compact oriented surface $\partial_+C$ to a compact oriented surface $\partial_-C$ which is constructed using only $2$--handles and $3$--handles, where enough $3$--handles are glued to avoid any $S^2$--component in $\partial_-C$. A \emph{lensed} compression body is then obtained by collapsing the vertical boundary of the cobordism so that the boundary of $\partial_+C$ becomes identified with the boundary of $\partial_-C$. 
\end{definition}

Note that the definition includes the possibility that $\partial_-C$ be empty. Note also that a compression body can alternatively be constructed by adding $1$--handles, either to a thickening of the negative boundary, which is a compact oriented surface containing no $2$--sphere, or to a $3$--ball. 
In what follows, compression bodies are supposed to be lensed.

\begin{definition}
 Let $C$ be a compression body. A \emph{defining disk system} for $C$ is a collection $\Dd$ of disjoint disks properly embedded in $C$ such that $C\ca\Dd$ is a thickening of $\partial_-C$, or a $3$--ball if $\partial_-C$ is empty. The boundary $\partial\Dd\subset\partial_+C$ is a {\em cut-system} for $C$.
\end{definition}

Note that defining disk systems do exist: take for instance the core disks of the $2$--handles in the definition (with a minimal number of $2$-- and $3$--handles).

\begin{lemma} \label{lemma:compirr}
 Every compression body is irreducible. 
\end{lemma}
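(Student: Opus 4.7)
The approach is the standard innermost-disk argument applied to a defining disk system. Fix a defining disk system $\Dd$ for $C$ and an embedded $2$-sphere $S \subset C$. After a small perturbation, we may assume that $S$ meets $\Dd$ transversally, and we induct on $n = |S \cap \Dd|$.

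For the base case $n = 0$, the sphere $S$ lies in a single component of $C \ca \Dd$, which by definition is either a $3$-ball or a product $\partial_- C \times I$. The $3$-ball is irreducible by Alexander's theorem. The product $\partial_- C \times I$ is irreducible because the definition of a compression body excludes $S^2$-components from $\partial_- C$: each component of $\partial_- C$ is therefore aspherical, each component of $\partial_- C \times I$ has trivial $\pi_2$, and the Sphere Theorem gives irreducibility. In either case $S$ bounds a $3$-ball.

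For the inductive step $n \geq 1$, I would pick an innermost intersection circle $\gamma$ on $S$, bounding a disk $D_S \subset S$ whose interior is disjoint from $\Dd$. The curve $\gamma$ also bounds a subdisk $D' \subset D$ on the component $D \in \Dd$ containing it. Surgering $S$ along two parallel pushoffs $D'_\pm$ of $D'$ produces two $2$-spheres $S_1 = D_S \cup D'_+$ and $S_2 = \overline{S \setminus D_S} \cup D'_-$, with $|S_1 \cap \Dd| = 0$ and $|S_2 \cap \Dd| = n - 1$. By the inductive hypothesis, each $S_i$ bounds a $3$-ball $B_i$, and a short orientation check shows that $S$ itself bounds either the union or the difference of $B_1$ and $B_2$ along a regular neighborhood of $D'$.

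The only content that is not purely routine is the base case, and in particular the irreducibility of $\partial_- C \times I$. This is exactly the reason the definition of a compression body forbids spheres in the negative boundary; beyond this, the argument is the Haken-style innermost-disk surgery alluded to in the proof of the preceding lemma on defining disk systems.
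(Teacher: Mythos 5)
Your overall strategy coincides with the paper's: reduce, via a defining disk system $\Dd$, to the case of a sphere lying in a $3$--ball or in a product $F\times I$ with $F$ a component of $\partial_-C$, and your induction on $|S\cap\Dd|$ with sphere surgery is in fact a more careful organization of what the paper compresses into the phrase ``a standard innermost disk argument''. However, in your inductive step the innermost circle should be taken on the disk $D\in\Dd$, not on $S$: with your choice, the subdisk $D'\subset D$ bounded by $\gamma$ may contain further circles of $S\cap D$ in its interior, so the pushoffs $D'_{\pm}$ still meet $S$ and the sphere $S_2=\overline{S\setminus D_S}\cup D'_-$ need not be embedded. Taking $\gamma$ innermost on $D$ (so that $\mathrm{int}(D')\cap S=\emptyset$) repairs this; the two surgered spheres then meet $\Dd$ in $n_1$ and $n_2$ circles with $n_1+n_2=n-1$, which is all the induction requires, and the reassembly of the two balls into one is as you describe.

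The more substantial issue is the base case, which the Sphere Theorem does not close. Asphericity of $F$ gives $\pi_2(F\times I)=0$, and the Sphere Theorem then says every embedded sphere in $F\times I$ is null-homotopic; but passing from ``null-homotopic'' to ``bounds a ball'' is precisely the content of irreducibility and is not formal --- in general it is where the Poincar\'e conjecture would enter. You should either cite the standard fact that $F\times I$ is irreducible for $F\neq S^2$, or argue as the paper does: embed $P\times I$ in $\R^3$, obtain a ball $B$ with $\partial B=S$ from Alexander's theorem, and verify that $B\subset P\times I$. That verification is immediate when $\partial P\neq\emptyset$ (the boundary of $P\times I$ is then connected and lies in the unbounded complementary region of $S$), but when $P$ is closed it needs a separate argument --- the paper uses the non-compact universal cover of $P$ to rule out $S$ separating the two boundary components of $P\times I$. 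This step is the only non-routine content of the lemma, so it deserves a complete argument rather than an appeal to the Sphere Theorem.
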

\begin{proof}
 We start with a trivial compression body $P\times I$, where $P$ is a compact connected oriented surface different from $S^2$. We embed this product in $\R^3$, which is irreducible. Now $S$ bounds a $3$--ball $B$ in $\R^3$. If $\partial P$ is non-empty, then $\partial(P\times I)$ is connected, so that it is contained in~$\R^3\setminus B$, and $B\subset(P\times I)$. Now assume $P$ is closed. If $S$ separate the two boundary components of $P\times I$, then the retraction of $P\times I$ onto $P\times\{0\}$ provides a map $f:S\to P$ which induces an isomorphism $f_*:H_2(S)\to H_2(P)$. But $f$ lifts to a map $\tilde f:S\to\tilde P$, where $\tilde P$ is the universal cover of $P$, which is non-compact since $g(P)>0$. It follows that $f_*$ factors through $H_2(\tilde P)=0$. We get a contradiction and conclude that $S$ does not separate the boundary of $P\times I$, so that $B\subset(P\times I)$.

 Now let $S$ be a $2$--sphere embedded in a compression body~$C$. Let $\Dd$ be a defining collection of disks for~$C$. 
 By the previous case, the components of $C\ca\Dd$ are irreducible. If $S$ meets $\Dd$, choose an intersection curve $\gamma$ which is innermost in $S$. Then $\gamma$ bounds a disk in $\Dd$ and a disk in $S$, which together form a $2$--sphere in $C\ca\Dd$, hence bounds a $3$--ball. Thus we can isotope $\Dd$ to remove this intersection curve. Iterating, we can assume $S$ is disjoint from $\Dd$ and we are done.
\end{proof}

Given a compact oriented $3$--manifold $M$, a {\em sutured decomposition} of $\partial M$ is a decomposition $\partial M=\partial_0 M\cup_s\partial_1 M$, where $\partial_0 M$ and $\partial_1 M$ are two compact surfaces oriented like $M$, and $s$ is their common boundary oriented as $\partial(\partial_1 M)$. The curve $s$ is called the {\em suture}. Note that, if $\partial_0 M$ and $\partial_1 M$ have no closed component, the datum of the oriented suture fully determines the sutured decomposition. A {\em sutured $3$--manifold} is a manifold whose boundary is equipped with a sutured decomposition.

\begin{definition}
 Let $M$ be a sutured $3$--manifold. A {\em sutured Heegaard splitting} of $M$ is a decomposition $M=C_1\cup_\Sigma C_2$ where $C_1$ and $C_2$ are compression bodies, $\Sigma=\partial_+ C_1=-\partial_+ C_2$ is a compact connected oriented surface, $\partial_-C_1=\partial_0 M$ and $\partial_-C_2=\partial_1 M$.
\end{definition}

Every sutured $3$--manifold admits a sutured Heegaard splitting, unique up to stabilization (see Juhasz \cite[Section~2]{Juh} or Dissler \cite[Section~3]{Dissler1}).

\subsection{Strong Haken's theorem for sutured Heegaard splittings}

\begin{definition}
 An essential embedded $2$--sphere in a $3$--manifold with a Heegaard splitting is {\em Haken} if it intersects the Heegaard surface transversely along a connected simple closed curve.
\end{definition}

\begin{definition}
 An embedded $2$--sphere $S$ in a compact $3$--manifold $M$ is {\em surviving} if $S$ is essential in the manifold obtained from $M$ by filling every $S^2$--component in $\partial M$ with a $3$--ball.
\end{definition}

The following theorem is due to Haken in the closed case \cite{Haken}. The proof we give here is taken from Jaco \cite{Jaco}. We reproduce it in order to take care of the small additionnal argument needed in the case of sutured Heegaard splittings.

\begin{theorem}[Haken] \label{th:Haken}
 Let $M=C_1\cup_\Sigma C_2$ be a sutured Heegaard splitting. If there is an essential ({\em resp} surviving) embedded $2$--sphere $S\subset M$, then there is an essential ({\em resp} surviving) embedded $2$--sphere $S'\subset M$ which is Haken.
\end{theorem}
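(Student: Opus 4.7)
The plan is to follow the classical strategy, due to Haken and streamlined by Jaco, of isotoping $S$ to minimize $n:=|S\cap\Sigma|$ and then showing that this minimum equals $1$. I would choose the representative of the isotopy class of $S$, transverse to $\Sigma$, that minimizes $n$; since any isotopy of $S$ preserves both its essential and its surviving character, it suffices to prove $n=1$.

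The first reduction rules out intersection circles that bound disks in $\Sigma$. If some circle of $S\cap\Sigma$ bounds such a disk, pick $\gamma$ innermost on $\Sigma$, so that the disk $E\subset\Sigma$ with boundary $\gamma$ satisfies $\mathrm{int}(E)\cap S=\emptyset$. Then $\gamma$ separates the sphere $S$ into two disks, one of which, $D$, lies in the compression body $C_i$ adjacent to $E$. The embedded 2-sphere $D\cup E$ sits in $C_i$, which is irreducible by Lemma~\ref{lemma:compirr}, hence bounds a 3-ball $B\subset C_i$. Isotoping $S$ across $B$ lowers $n$, contradicting minimality. After this step, every circle of $S\cap\Sigma$ is essential in $\Sigma$.

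The main step is to show that $n\ge 2$ is impossible. Assuming $n\ge 2$, the planar decomposition of $S$ by $S\cap\Sigma$ contains at least two innermost-disk pieces, each of which is an essential compressing disk for $\Sigma$ in one of the $C_i$ (since its boundary is essential in $\Sigma$). I would then run the standard Jaco-type innermost argument: fix a defining disk system $\Dd$ for, say, $C_2$, perform a secondary minimization of $|S\cap\Dd|$ among isotopies of $S$ preserving $|S\cap\Sigma|$, remove closed components of $S\cap\Dd$ by irreducibility, and extract an outermost arc of $S\cap\Dd$ on a component of $\Dd$. This outermost arc yields a disk $\delta\subset C_2$ whose boundary is the union of an arc of $S$ and an arc of $\Sigma$, which can be used either to produce an isotopy of $S$ further lowering $|S\cap\Sigma|$ (contradicting the initial minimization) or to directly extract an essential (resp. surviving) Haken 2-sphere from $S$. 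I expect this combinatorial step to be the main obstacle: checking that the outermost-arc reduction behaves well requires careful bookkeeping of the planar pieces of $S\setminus\Sigma$, and the case where the two endpoints of the outermost arc sit on the same intersection circle needs separate treatment.

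The sutured aspect enters only mildly. Every 3-ball produced in the arguments above lies inside a single compression body $C_i$, hence in the interior of $M$, disjoint from $\partial M$ and from the suture; all isotopies of $S$ are supported in small neighborhoods of such balls. So $S$ never meets $\partial M$, and its essential (resp. surviving) character is preserved without any extra care. The ``small additional argument'' needed for the sutured case thus amounts to replacing irreducibility of handlebodies by Lemma~\ref{lemma:compirr} wherever the classical Jaco proof invokes it, and to observing that the planar-piece analysis on $S$ is insensitive to the presence of a suture on $\partial M$.
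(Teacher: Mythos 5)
There is a genuine gap, and it sits exactly where you flag ``the main obstacle.'' Your scheme minimizes $n=|S\cap\Sigma|$ \emph{within the isotopy class of $S$} and aims to show $n=1$. But the statement being proven is the weak Haken lemma: it only asserts the existence of \emph{some} Haken essential (resp.\ surviving) sphere $S'$, not one isotopic to $S$. Correspondingly, the reduction moves available when $n\ge 2$ are compressions of $S_1=S\cap C_1$ along compressing disks, and a compression surgers $S$ into two spheres, at least one of which is essential (resp.\ surviving); the sphere you keep is generally \emph{not isotopic} to $S$. So producing a sphere with smaller intersection does not contradict minimality over the isotopy class of $S$, and your induction does not close. (Establishing $n=1$ for the isotopy-minimal representative would be the strong Haken theorem, i.e.\ Theorem~\ref{th:StrongHaken}, which is proven later by a completely different mechanism and \emph{uses} the present theorem as input.) The fix is to run the descent over all essential (resp.\ surviving) spheres with a complexity that strictly decreases under the moves. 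The paper uses the number of components of $S\cap\Sigma$: compress $S_1$ maximally (the component count does not increase), then $\partial$-compress maximally; Lemma~\ref{lemma:Haken1} then forces $S_1$ to be a union of disks, and Lemma~\ref{lemma:Haken2} (Johnson's counting lemma) guarantees the component count has \emph{strictly} dropped. Your outermost-arc step, as described, has no such guarantee of strict progress, and you explicitly defer the bookkeeping that constitutes the actual content of the proof.

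Your last paragraph also understates the sutured-specific work. It is not merely a matter of replacing irreducibility of handlebodies by Lemma~\ref{lemma:compirr}. A compression body $C_i$ is cut into a ball not by a disk system alone but by a disk system together with spanning annuli over the components of $\partial_-C_i$ (pairs $\alpha_j\times I$, $\beta_j\times I$ for closed components, and $\gamma_j\times I$ for components with boundary). The additional argument in Lemma~\ref{lemma:Haken1} is precisely ruling out essential intersections of $S_1$ with these annuli, which uses that $S_1$ is planar: an essential intersection with $\alpha_j\times I$ forces one with $\beta_j\times I$, and a neighborhood of the two curves in $S$ would be a punctured torus. None of this appears in your sketch.
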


This result will follow from two lemmas. A {\em spanning annulus} in a compression body $C$ is a properly embedded annulus with a boundary component on each of $\partial_{\pm}C$.

\begin{lemma} \label{lemma:Haken1}
 Let $C$ be a compression body. Let $S$ be a genus--$0$ compact surface with no closed component. Assume $(S,\partial S)$ is embedded in $(C,\partial_+C)$. If $S$ is incompressible and boundary-incompressible, then $S$ is a union of disks.
\end{lemma}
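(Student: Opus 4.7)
The plan is to apply the standard innermost-disk / outermost-arc reduction on a defining disk system, and then finish via a boundary-parallelism argument in the resulting simpler manifold.

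I would first pick a defining disk system $\Dd$ for $C$ and put $S$ in transverse general position with $|S\cap\Dd|$ minimized by isotopy of $S$; the goal is to show this minimum is zero. Incompressibility of $S$ together with irreducibility of $C$ (Lemma~\ref{lemma:compirr}) rules out closed curves of intersection: an innermost one on $\Dd$ bounds a subdisk $D_0\subset\Dd$ with $\Int D_0$ disjoint from $S$; incompressibility gives $D_0'\subset S$ with $\partial D_0'=\partial D_0$; the sphere $D_0\cup D_0'$ bounds a ball; and $S$ can be isotoped across it to decrease $|S\cap\Dd|$. Boundary-incompressibility of $S$ rules out arc intersections: an outermost arc $\alpha$ on $\Dd$ cobounds a subdisk $E\subset\Dd$ with a subarc $\beta\subset\partial_+C$, and if $\alpha$ were essential in $S$ then $E$ would be a boundary-compressing disk; inessentiality of $\alpha$ in $S$ yields a subdisk $E'\subset S$ bounded by $\alpha$ together with an arc of $\partial S$, and choosing $E'$ with minimal $|E'\cap\Dd|$ lets one isotope $S$ across $E\cup E'$ to decrease $|S\cap\Dd|$, a contradiction.

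Once $S\cap\Dd=\emptyset$, the surface $S$ lies in $C\ca\Dd$, which is either a $3$-ball $B$ (when $\partial_-C=\emptyset$) or a product $P\times I$ with $P=\partial_-C\neq\emptyset$. In either case, $\partial S$ lies in a single face $F\subset\partial(C\ca\Dd)$ coming from $\partial_+C$, and $S$ inherits incompressibility and boundary-incompressibility (any boundary-compressing disk in the cut manifold with $\beta\subset F$ lifts to one in $C$). The argument concludes by invoking the fact that any incompressible, boundary-incompressible, properly embedded surface in $B^3$ or in $P\times I$ (with $P\neq S^2$) having boundary on a single face and no closed components is a union of disks: such a surface is boundary-parallel into the face, and any non-disk component parallel to a non-disk subsurface of $F$ would admit essential arcs whose product regions would give boundary-compressing disks.

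The main obstacle is the last boundary-parallelism claim in the product case $P\times I$ when $P$ has positive genus or several components. Standard arguments via a height function on the product and further innermost-disk / outermost-arc manipulations apply, and the hypothesis that $P$ contains no $2$-sphere component enters precisely through the irreducibility of $P\times I$ used there.
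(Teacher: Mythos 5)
The first half of your argument (removing circles and arcs of $S\cap\Dd$ by innermost-disk and outermost-arc moves, using incompressibility, $\partial$-incompressibility and irreducibility) is correct and matches the opening of the paper's proof. The gap is in the final step, and it is a real one. After cutting along $\Dd$ alone you land in $P\times I$ with $P=\partial_-C$ possibly of positive genus, and you invoke the statement that an incompressible, boundary-incompressible surface there with boundary in one face is boundary-parallel, hence a union of disks. Two problems. First, the hypothesis you would need for that classification is $\partial$-incompressibility of $S$ with respect to the \emph{whole} boundary of $P\times I$, whereas what survives the cut is only $\partial$-incompressibility relative to the sub-face $F=\partial_+C\ca\partial\Dd$: a $\partial$-compressing disk in $P\times I$ whose arc $\beta$ runs over a scar of $\Dd$ or over $P\times\{0\}$ does not lift to a $\partial$-compression of $S$ in $C$, so you cannot rule it out, and your parenthetical ``with $\beta\subset F$'' quietly restricts to the only case you can handle. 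Second, even granting full $\partial$-incompressibility, the assertion ``boundary-parallel into the face'' is essentially the classification of essential surfaces in $I$-bundles (horizontal/vertical/parallel); this is a genuine theorem of the same depth as the lemma itself, and ``standard arguments via a height function'' is precisely where the proof would have to live. As written, the crux of the lemma has been deferred to an unproved black box.

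The paper avoids this entirely by cutting along a larger family $\Ff$: the defining disks \emph{together with} spanning annuli over a cut system of each component of $\partial_-C$, so that $C\ca\Ff$ is a $3$--ball, where incompressibility alone finishes the argument (an innermost boundary circle of $S$ on $\partial B$ bounds a disk in $B$, hence in $S$). The price is that one must remove essential intersection curves of $S$ with the spanning annuli, and this is exactly where the planarity hypothesis enters: an essential curve of $S\cap A_i$ forces an essential curve of $S\cap B_i$, and a neighborhood of their union in $S$ is a punctured torus, contradicting $g(S)=0$. Note that your proposal never uses the planarity of $S$ at all; that is a warning sign that the hard part of the statement has not actually been engaged. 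To repair your route you would either have to prove the product-case classification you invoke (taking care of the scars and of $P\times\{0\}$), or enlarge the cutting system as the paper does and supply the punctured-torus argument.
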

\begin{proof}
 We construct a family $\Ff$ of disks and annuli properly embedded in $C$, as follows. First take a defining disk system for $C$. Then, for each component $F$ of $\partial_-C$:
 \begin{itemize}
  \item if $F$ is closed, choose a collection of simple closed curves $(\alpha_i,\beta_i)_{i\in I}$ on $F$ which are pairwise disjoint, except that for all $i$, $\alpha_i$ intersects $\beta_i$ transversely once, and which cut $F$ into a disk; take spanning annuli $A_i\cong\alpha_i\times I$ and $B_i\cong\beta_i\times I$,
  \item if $F$ has a non-empty boundary, choose a collection of properly embedded arcs $(\gamma_i)_{i\in J}$ which cut $F$ into a disk; take spanning annuli $C_i\cong\gamma_i\times I$.
 \end{itemize}
 We assume that all these disks and annuli are pairwise disjoint, except each pair $(A_i,B_i)$ which intersects along an arc. Note that cutting $C$ along all the disks and annuli in $\Ff$ gives a $3$--ball.
 
 Suppose $S$ meets $F\in\Ff$ along a simple closed curve $\gamma$ that bounds a disk in $F$. Since $S$ is incompressible, $\gamma$ also bounds a disk in $S$. It then follows from the irreducibility of $C$ that $F$ can be isotoped in order to remove this intersection. By a standard innermost disk argument, we see that all such intersections can be removed while keeping the disjointness properties of $\Ff$.
 
 Similarly, by $\partial$--incompressibility of $S$ and irreducibility of $C$, we can assume that $S$ never meets an $F\in\Ff$ along a properly embedded arc which is non-essential in $F$. 
 
 The last possibility is that $S$ intersects an annulus $A_i$ ({\em resp} $B_i$) along an essential curve~$\xi$. It implies that $S$ also meets the annulus $B_i$ ({\em resp} $A_i$) along an essential curve $\zeta$ (we have removed yet non-essential intersection curves). But then a tubular neighborhood of $\xi\cup\zeta$ in $S$ is a punctured torus. This is impossible since $S$ is a genus--$0$ surface.
 
 Finally, we can assume that $S$ is disjoint from all $\Ff$. Cutting $C$ along all disks and annuli in $\Ff$ gives a $3$--ball $B$. Take a boundary component $\sigma$ of $S$ which innermost in $\partial B$. The curve $\sigma$ bounds a disk in~$B$, so, since $S$ is incompressible, this component of $S$ is a disk. Iterating the argument, we see that $S$ is a disjoint union of disks.
\end{proof}

\begin{lemma}[\textup{\cite[Lemma II.8]{Jaco}}] \label{lemma:Haken2}
 Let $S$ be a genus--$0$ compact surface which is not a union of disks. Let $\alpha_1,\dots,\alpha_n$ be disjoint properly embedded arcs in $S$ such that:
 \begin{itemize}
  \item $S\ca\cup_{i=1}^n\alpha_i$ is a union of disks,
  \item for all $i$, $\alpha_i$ is essential in $S\ca\cup_{j=i+1}^n\alpha_j$.
 \end{itemize}
 Then $S\ca\cup_{i=1}^n\alpha_i$ has strictly fewer components than $\partial S$.
\end{lemma}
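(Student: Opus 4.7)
The plan is to argue by induction on $n$. Write $b$ for the number of components of $\partial S$ and $k$ for the number of components of $S\ca\bigcup_i\alpha_i$; the hypothesis that $S$ is connected, planar, and not a disk gives $b\geq 2$, and the goal is $k<b$. For $n=0$ the surface $S$ would itself be a disk, contradicting the hypothesis, so we may assume $n\geq 1$.

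The inductive step focuses on $\alpha_n$, which is essential in $S_n=S$. Because $S$ is connected and planar, two cases arise. In Case (i), the endpoints of $\alpha_n$ lie on two distinct boundary circles of $S$, and $S\ca\alpha_n$ is connected and planar with $b-1$ boundary circles. In Case (ii), both endpoints of $\alpha_n$ lie on a single boundary circle of $S$, and by planarity $\alpha_n$ separates $S$ into two planar components $S'$ and $S''$ with $|\partial S'|+|\partial S''|=b+1$; crucially, the essentialness of $\alpha_n$ prevents either component from being a disk (such a disk component would exhibit $\alpha_n$ as cobounding a disk with a sub-arc of $\partial S$, hence as boundary-parallel), so $|\partial S'|, |\partial S''|\geq 2$.

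Each subproblem involves strictly fewer than $n$ arcs, so the inductive hypothesis applies. In Case (i) this gives $k<b-1<b$, with the degenerate subcase that $S\ca\alpha_n$ is itself a disk (forcing $b=2$, and then $n=1$ and $k=1$, since a disk admits no essential arcs) handled directly. In Case (ii), induction applied separately to $(S',\{\alpha'_j\})$ and $(S'',\{\alpha''_j\})$ gives $k=k'+k''\leq(|\partial S'|-1)+(|\partial S''|-1)=b-1<b$. The technical point to verify is that the arcs inherited by the smaller surfaces still satisfy the essentialness hypothesis in their new ambient surfaces; this follows from the commutativity of cutting along disjoint arcs, since cutting $S$ first along $\alpha_n$ and then along $\alpha_{i+1},\ldots,\alpha_{n-1}$ produces the same intermediate surface as cutting $S$ along $\alpha_{i+1},\ldots,\alpha_n$ in the original order, so essentialness of $\alpha_i$ in $S_i$ directly yields essentialness of its image in the relevant component of the smaller problem.
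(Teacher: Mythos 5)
Your proof is correct. The paper does not actually prove this lemma --- it is quoted verbatim from Johnson's notes --- so there is no argument in the text to compare against; your induction on $n$, cutting along $\alpha_n$ first and splitting into the two cases according to whether its endpoints lie on one or two boundary circles of $S$, is the standard argument, and the delicate points are all addressed: essentiality of $\alpha_n$ rules out a disk component in the separating case, the annulus subcase of Case (i) is handled separately, and the inherited arc systems satisfy the essentiality hypothesis because $(S\ca\alpha_n)\ca\bigcup_{j=i+1}^{n-1}\alpha_j=S\ca\bigcup_{j=i+1}^{n}\alpha_j$. One point you leave implicit in Case (ii): each of $S'$ and $S''$ must in fact contain at least one of the remaining arcs, since otherwise it would survive as a component of $S\ca\bigcup_{i=1}^n\alpha_i$ and hence be a disk, which you have already excluded; this guarantees that the inductive hypothesis is applied to genuine (nonvacuous) instances of the lemma on both sides.
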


\begin{proof}[Proof of Theorem~\ref{th:Haken}]
 For $i=1,2$, set $S_i=S\cap C_i$. Note that, if both $S_1$ and $S_2$ are unions of disks, then they are connected and $S$ is Haken. Assume $S_i$ is not a union of disks. Perform on $S_i$ as many compressions as possible. At each compression, $S$ is surgered into two spheres, at least one of which is essential ({\em resp} surviving); keep that one. Note that the number of components of $\partial S_i$ cannot increase during this process. If $S_i$ is still not a union of disks, perform on $S_i$ as many $\partial$--compressions as possible. Thanks to Lemma~\ref{lemma:Haken1}, $S_i$ is now a union of disks, and by Lemma~\ref{lemma:Haken2}, the number of components of $\partial S_i$ has strictly decreased. 
 Iterate this process until $S\cap\Sigma$ is connected.
\end{proof}

We now prove a strong Haken's theorem for sutured Heegaard splittings, following Hensel and Schultens~\cite{HS}. We need a condition on the splitting. We say that a sutured Heegaard splitting $M=C_1\cup_\Sigma C_2$ is {\em admissible} if every $2$--sphere in $\partial M$ meets $\Sigma$ along a connected curve.

\begin{theorem} \label{th:StrongHaken}
 Let $M=C_1\cup_\Sigma C_2$ be an admissible sutured Heegaard splitting. Every essential embedded $2$--sphere in $M$ is isotopic to a Haken sphere.
\end{theorem}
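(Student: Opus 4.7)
The plan is to fix an essential embedded $2$--sphere $S\subset M$, choose a representative of its isotopy class transverse to $\Sigma$ that minimises $n=|S\cap\Sigma|$, and argue by contradiction that this minimum is $1$. The case $n=0$ is ruled out immediately: such an $S$ would lie inside one of the compression bodies $C_i$, and Lemma~\ref{lemma:compirr} would then give a $3$--ball in $C_i\subset M$ bounded by $S$, contradicting essentiality. So $n\ge 1$, and I must exclude $n\ge 2$ to finish.

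Assuming $n\ge 2$, I would locate an innermost disk of $S$ cut along $\Sigma$: that is, a component $D$ of some $S_i:=S\cap C_i$ that is a disk, bounded by a single curve $\gamma$ of $S\cap\Sigma$. Such $D$'s exist on both sides because $S$ is a sphere. Following the strategy of Hensel--Schultens~\cite{HS}, I would construct an ambient isotopy strictly reducing $n$, distinguishing whether $\gamma$ is inessential or essential in $\Sigma$. In the inessential case, pick $\gamma$ innermost in $\Sigma$ among curves of $S\cap\Sigma$ bounding disks there, so that it co-bounds a disk $D'\subset\Sigma$ with interior disjoint from $S$; then $D\cup D'$ bounds a $3$--ball $B\subset C_i$ by Lemma~\ref{lemma:compirr}, and an isotopy supported near $B$ pushes $D$ across $D'$ and decreases $n$, contradicting minimality. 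The delicate ingredient here is that the interior of $B$ might a priori contain sphere components of $\partial M$; this is exactly where the admissibility hypothesis enters, ensuring that each such sphere meets $\Sigma$ along a single curve so that it can be handled by a preliminary innermost-sphere reduction.

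The essential case is the main obstacle. If $\gamma$ is essential in $\Sigma$, then $D$ is a compressing disk for $\Sigma$ inside $C_i$, and irreducibility alone does not provide an isotopy reducing $n$. Here I would invoke the genuine Hensel--Schultens move: pair the innermost disk $D\subset C_i$ with an innermost disk $D_2\subset C_{3-i}$ (which exists since $S$ is a sphere and $n\ge 2$), and use the two compression body structures to produce an embedded cut--and--paste isotopy of $S$ that exchanges a subdisk of $S_i$ bounded by curves of $S\cap\Sigma$ against one of $S_{3-i}$, thereby decreasing $n$. The adaptation to the sutured case will require keeping track of $\partial_-C_i$ and of sphere components of $\partial M$; this is precisely the point where I expect the admissibility hypothesis to be essential and where Lemmas~\ref{lemma:compirr} and~\ref{lemma:Haken1} will be most heavily used to guarantee that the innermost disks can be chosen compatibly and the resulting isotopy is embedded.

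Once this reduction step is available in both cases, iterating it contradicts the minimality of $n$ as soon as $n\ge 2$, forcing $n=1$ and producing the desired Haken sphere isotopic to $S$. The heart of the argument, and the only step that is not a routine adaptation of the techniques already developed in the previous subsection, is verifying that the Hensel--Schultens disk-swap goes through in the presence of sutures and the admissibility condition.
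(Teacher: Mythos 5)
There is a genuine gap, and it sits exactly where you flag ``the main obstacle'': the case where the innermost curve $\gamma$ of $S\cap\Sigma$ is essential in $\Sigma$. The move you propose --- pairing innermost disks on the two sides and performing an ``embedded cut--and--paste isotopy'' that swaps subdisks of $S$ across $\Sigma$ --- is not an isotopy. Cut--and--paste (surgery) operations change the isotopy class of $S$; they are precisely what is allowed in the \emph{weak} Haken theorem (Theorem~\ref{th:Haken} of the paper, whose proof does exactly such compressions and $\partial$--compressions), and the whole difficulty of the \emph{strong} version is that no direct isotopy reducing $|S\cap\Sigma|$ is available in the essential case. This is also not what Hensel--Schultens do, so the appeal to their ``genuine move'' does not rescue the step. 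Your reading of where admissibility enters is likewise off: in the paper it is what makes non-surviving spheres (those parallel to $S^2$--components of $\partial M$) automatically Haken via Lemma~\ref{lemma:non-surviving}, not a device for clearing boundary spheres out of an innermost ball.

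The paper's actual argument avoids any attempt to reduce $|S\cap\Sigma|$ by isotopy. It inducts on the complexity $(g(\Sigma),|\partial\Sigma|)$ of the \emph{splitting}, not on the intersection number: the weak Haken theorem supplies one Haken surviving sphere $S_0$; the graph of surviving spheres is connected (here surgery is harmless, since one only needs disjoint representatives, not isotopic ones); and the key claim is that a surviving sphere $T$ disjoint from a surviving Haken sphere $S$ is itself isotopic to a Haken sphere, proved by cutting $M$ along $S$ and applying the inductive hypothesis to the resulting lower-complexity splitting containing $T$. Walking along a path in the sphere graph from $S_0$ to the given sphere then finishes the proof. To repair your write-up you would need to abandon the minimal-position scheme in the essential case and adopt this induction-plus-sphere-graph structure, or else supply a genuinely new isotopy argument --- which would amount to a new proof of the strong Haken theorem.
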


\begin{lemma} \label{lemma:non-surviving}
 In a sutured Heegaard splitting, every non-surviving sphere is isotopic to a Haken sphere.
\end{lemma}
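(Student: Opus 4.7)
The plan is to treat separately the case where $S$ bounds a $3$--ball in $M$ from the case where it does not, and in the latter construct an explicit Haken sphere cobounding the same region with $\partial M$ as $S$ does.

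Since $S$ is non-surviving, it bounds a $3$--ball $\hat B$ in the filled manifold $\hat M$. Let $R := \hat B \cap M$; then $R$ is a $3$--ball with $k \geq 0$ open $3$--balls removed, and $\partial R = S \sqcup S_1 \sqcup \dots \sqcup S_k$, where $S_1, \dots, S_k$ are the $S^2$--components of $\partial M$ contained in $\hat B$. If $k=0$, then $S$ bounds a $3$--ball in $M$; since any two embedded $2$--spheres bounding $3$--balls in a connected $3$--manifold are ambient isotopic, it suffices to exhibit a Haken sphere bounding a ball, and the boundary of a small regular neighborhood of any point of $\Sigma$ is such a sphere.

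For $k \geq 1$, I would construct a Haken sphere $T$ isotopic to $S$ as follows. Take a collar $S_i \times [0,\epsilon]$ of each $S_i$ in $M$; the surface $\Sigma$ meets this collar in annuli $c \times [0,\epsilon]$, one for each suture circle $c$ in $s \cap S_i$, so the parallel sphere $\tilde S_i := S_i \times \{\epsilon\}$ meets $\Sigma$ in curves parallel to $s \cap S_i$. Choose arcs $\alpha_1, \dots, \alpha_{k-1} \subset \Sigma$ joining the $\tilde S_i$'s in a tree pattern, and tube the spheres together along these arcs. Each tubing is a band sum of the corresponding intersection curves along an arc in $\Sigma$, so it merges two intersection circles into one. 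In the admissible case, where each $s \cap S_i$ is a single circle, the resulting sphere $T$ meets $\Sigma$ in a single circle, hence is Haken. Finally, since $S$ and $T$ cobound ``ball minus $k$ open balls'' regions with the same $S_1, \dots, S_k$, a relative isotopy extension argument yields an ambient isotopy of $M$ carrying $S$ to $T$.

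The main obstacle I expect is the non-admissible case, where some $s \cap S_i$ has more than one circle: then $\tilde S_i$ contributes several intersection curves with $\Sigma$ and the tubing above leaves $T \cap \Sigma$ disconnected. To conclude, one must perform further isotopies of $T$ that absorb adjacent pairs of parallel intersection circles via annuli they cobound in $\Sigma \cap R$. Verifying that such annuli exist and can be used without creating new intersections requires a careful local analysis near each $S^2$--component of $\partial M$, and I expect this to be where the genuinely delicate work of the argument lies.
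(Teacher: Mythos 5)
The paper gives no argument of its own here --- it defers verbatim to \cite[Lemma~3.7]{HS} --- so your proposal has to stand on its own. Your geometric picture is the right one: a non-surviving sphere $S$ cuts off a region $R$ that is a ball minus $k$ balls, and a Haken model $T$ is obtained by tubing push-offs of the enclosed boundary spheres along a tree of arcs in $\Sigma$. The gap is the step you dispatch with ``a relative isotopy extension argument.'' Isotopy extension promotes a given isotopy of a submanifold to an ambient one; it does not manufacture an isotopy between $S$ and $T$ from the fact that they cobound homeomorphic regions with the same boundary spheres (two surfaces with homeomorphic complementary pieces need not be isotopic). What you actually need is that any two spheres cutting off punctured balls containing the same $S^2$--components of $\partial M$ are isotopic. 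Writing each such region as a regular neighborhood of $S_1\cup\dots\cup S_k\cup\Gamma$ for a tree of arcs $\Gamma$, this amounts to showing the enclosing sphere is independent of $\Gamma$ up to isotopy. Two such trees are only obviously homotopic, and homotopic arcs need not be isotopic; one must check that crossing changes between arcs, and changes of the tree's combinatorics, can be absorbed into isotopies of the enclosing sphere (e.g.\ by sliding a tube over one of the spheres $\tilde S_i$, or across a disk on the other side of $\Sigma$). This is exactly the kind of argument the paper writes out in the proof of Proposition~\ref{prop:HSpuncproduct}, and it is the genuine content of the lemma; it cannot be waved through.

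Two smaller points. In your $k=0$ case the candidate sphere bounds a ball, hence is inessential and so is not Haken under the paper's definition; that case simply should not arise, since the lemma is only ever applied to essential spheres, for which $k\geq1$. And you are right that your tubing construction breaks down in the non-admissible case, but the lemma as stated carries no admissibility hypothesis; you must either supply the ``absorb adjacent parallel suture circles'' argument you only sketch, or note explicitly that the lemma is invoked only inside the proof of Theorem~\ref{th:StrongHaken}, where admissibility is assumed. As written, the proposal completes neither the main isotopy step nor the non-admissible case.
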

\begin{proof}
 The proof of \cite[Lemma 3.7]{HS} applies verbatim.
\end{proof}

The proof of Theorem~\ref{th:StrongHaken} involves the graph of surviving spheres $\Ss(M)$, defined as follows.
\begin{itemize}
 \item The vertices are the isotopy classes of surviving spheres in $M$.
 \item There is an edge between two vertices if the corresponding isotopy classes can be realized by disjoint spheres.
\end{itemize}

\begin{lemma}
 The graph $\Ss(M)$ is connected.
\end{lemma}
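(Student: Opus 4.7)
The plan is to prove connectivity of $\Ss(M)$ by induction on the geometric intersection number of two surviving spheres placed in general position. Let $S_1,S_2$ be surviving spheres and $n=|S_1\cap S_2|$ the number of transverse intersection curves. If $n=0$ there is already an edge, so I may assume $n\geq 1$; the goal is to surger $S_2$ to obtain a surviving sphere $\tilde S_2$ that is disjoint from $S_2$ and meets $S_1$ in strictly fewer than $n$ curves. The edge $(S_2,\tilde S_2)$, together with the inductively produced path from $\tilde S_2$ to $S_1$, then connects $S_1$ to $S_2$ in $\Ss(M)$.

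For the inductive step I pick an innermost curve $\gamma\subset S_1\cap S_2$ in $S_1$, cutting off a disk $D\subset S_1$ whose interior misses $S_2$. The curve $\gamma$ separates $S_2$ into two disks $D'$ and $D''$, and surgering $S_2$ along $D$ produces two spheres $S_2'=D'\cup D_+$ and $S_2''=D''\cup D_-$, where $D_\pm$ are parallel push-offs of $D$. After a small isotopy pushing them off $S_2$, both are disjoint from $S_2$, and their intersections with $S_1$ account together for the $n-1$ curves of $S_1\cap S_2$ distinct from $\gamma$, so each meets $S_1$ in fewer than $n$ curves. It remains to check that at least one of $S_2',S_2''$ is surviving: letting $\hat M$ denote the closed $3$--manifold obtained from $M$ by capping off the $S^2$ boundary components, so that surviving means essential in $\hat M$, the point is that if both $S_2'$ and $S_2''$ bounded $3$--balls in $\hat M$, combining those balls with the product region between the push-offs $D_\pm$ would build a $3$--ball bounded by $S_2$, contradicting that $S_2$ is essential in $\hat M$.

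The main obstacle is justifying this last step when $\hat M$ is not irreducible, which is the generic situation here since boundaries of $4$--dimensional compression bodies naturally contain $S^1\times S^2$ summands. In such a manifold a separating sphere may bound a $3$--ball on either side or on neither, and the two balls supplied by the hypothetical inessentiality of $S_2'$ and $S_2''$ might fail to sit on the ``right'' sides, or might interfere with each other or with the bigon $D\times[-1,1]$. I plan to resolve this by a case analysis according to which complementary region of $S_2'\cup S_2''$ in $\hat M$ contains the bigon, in the spirit of Hensel and Schultens~\cite{HS}, selecting in each case a compatible pair of balls that glue across the bigon to produce the desired ball bounded by $S_2$.
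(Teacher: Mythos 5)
Your argument follows the same route as the paper's: induct on the number of intersection curves $|S_1\cap S_2|$, surger the second sphere along an innermost disk of the first, keep a surviving piece, and use the edge between the surgered sphere and $S_2$ to build the path. The step you single out as the main obstacle --- that at least one of $S_2'$, $S_2''$ is surviving --- is precisely what the paper asserts without further justification, and your sketched resolution is the standard and correct one: if neither ball contains the product region $D\times[-1,1]$, the two balls are disjoint and glue with it to a ball bounded by $S_2$, while if one of them contains the product region it contains $S_2$ itself and Alexander's theorem inside that ball applies.
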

\begin{proof}
 Assume there are non-isotopic surviving spheres $S$ and $S'$ in $M$. Assume they minimize their number of intersection curves within their isotopy classes. If they do intersect, choose an intersection curve $c$ which is innermost in $S$, thus bounds a disk $\delta\subset S$ whose interior is disjoint from $S'$. Then surger $S'$ along $\delta$: this provides two embedded spheres in $M$, disjoint from $S'$ and having fewer intersection curves with $S$, at least one of which is surviving. Iterating, we get a path between the isotopy classes of $S$ and $S'$ in $\Ss(M)$.
\end{proof}

\begin{proof}[Proof of Theorem~\ref{th:StrongHaken}]
 We proceed by induction on $(g,n)=(g(\Sigma),|\partial\Sigma|)$ with lexicographic order.
 
 If $g=0$, then $M$ is a punctured $S^3$, hence it contains no surviving sphere and Lemma~\ref{lemma:non-surviving} concludes. Now fix $(g,n)\succ(0,3)$.
 
 We first prove the following claim: 
 if $S\subset M$ is a surviving Haken sphere and $T\subset M$ is a surviving sphere disjoint from $S$, then $T$ is isotopic to a Haken sphere. Indeed, let $N$ be the component of $M\ca S$ (over $1$ or $2$ components) which contains $T$. The Heegaard splitting induced on  $N$ by that of $M$ has either a strictly lower genus, or a Heegaard surface with strictly less boundary components. Hence we can apply the inductive hypotesis.
 
 Now, the result is given by Lemma~\ref{lemma:non-surviving} if $M$ contains no surviving sphere. Otherwise, $M$ contains a Haken sphere $S_0$ by Theorem~\ref{th:Haken}. If $S$ is an essential $2$--sphere in $M$ non-isotopic to $S_0$, either it is non-surviving and we apply Lemma~\ref{lemma:non-surviving}, or it is surviving and there is a path of surviving spheres from $S_0$ to $S$ in which successive spheres are disjoint, then we apply the above claim.
\end{proof}

\subsection{Sutured Heegaard splittings of double compression bodies} \label{sec:doublecomp}

We are interested in understanding the Heegaard splittings of {\em double compression bodies}, namely compact $3$--manifolds obtained by gluing two copies of a given compression body along their positive boundaries {\em via} the identity map. Such a double compression body can be written as $\big(\sharp(P\times I)\big)\sharp\big(\sharp^k (S^1\times S^2)\big)$, where $P$ is a compact surface, connected or not, possibly empty, containing no $2$--sphere, and $\sharp(P\times I)$ is the connected sum of all the components of $(P\times I)$. We define a sutured decomposition of $\partial M$ as follows: $P\times \{0\}\subset\partial_0 M$, $P\times\{1\}\subset\partial_1 M$, and the suture is $s=\partial P\times\{\frac12\}$. 
Thanks to Theorem~\ref{th:StrongHaken}, we essentially need to understand the splittings of products $P\times I$ with $P$ connected, possibly with punctures.

\begin{proposition} \label{prop:HSpuncproduct}
 Let $P$ be a compact connected oriented surface of genus $g\geq0$. Let $B_1,\dots,B_k$ be disjoint closed $3$--balls embedded in the interior of $P\times I$, each of which intersects $P\times\{\frac12\}$ transversely along a $2$--disk, where $k\geq0$. Set $\Bb=\cup_{i=1}^kB_i$, $M=(P\times I)\setminus\Int(\Bb)$, and $\Sigma_0=M\cap(P\times\{\frac12\})$. Define a sutured decomposition of $\partial M$ with the suture $s=\partial\Sigma_0$, $P\times \{0\}\subset\partial_0 M$, and $P\times\{1\}\subset\partial_1 M$. Then the Heegaard splitting of $M$ defined by the Heegaard surface $\Sigma_0$ is the unique genus--$g$ Heegaard splitting of $M$ up to isotopy. It is the minimal genus Heegaard splitting of $M$.
\end{proposition}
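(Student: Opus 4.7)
The plan is to induct on $k$.

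\textbf{Base case.} When $k=0$, $M=P\times I$ and any genus-$g$ Heegaard surface $\Sigma$ has both compression bodies of minimal Euler characteristic (equal to $\chi(\partial_-C_i)$), hence containing no $1$-handles: both are products. In particular $\Sigma$ is incompressible in $M$, so Waldhausen's classification of incompressible surfaces in $P\times I$ shows $\Sigma$ is isotopic to a horizontal section $P\times\{t\}$, which a further isotopy brings to $P\times\{1/2\}=\Sigma_0$. The same Euler-characteristic/handle count also yields the minimality claim.

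\textbf{Inductive step ($k\geq 1$).} A collar push-off of $\partial B_k$ into the interior of $M$ is an essential, non-surviving $2$-sphere, so by Lemma~\ref{lemma:non-surviving} it is isotopic to a Haken sphere $T_k$ meeting $\Sigma$ in a single circle $c_k$. Let $R_k\cong S^2\times I$ be the product region cobounded by $T_k$ and $\partial B_k$, set $M_k=M\setminus\Int(R_k)$, and let $\tilde B_k\subset P\times I$ be the ball bounded by $T_k$. Cap $c_k$ by a disk $D^*\subset\tilde B_k$ and define
\[
\Sigma^{(k-1)}:=(\Sigma\cap M_k)\cup D^*\subset M^{(k-1)}:=(P\times I)\setminus\Int\bigl(\textstyle\bigsqcup_{i<k}B_i\bigr).
\]
The surface $\Sigma\cap R_k$ is connected with boundary $c_k\cup e_k$ (where $e_k$ is the equator of $\partial B_k$), by connectedness of $\Sigma$ together with the structure of $\partial\Sigma=s$. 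A direct check shows $\Sigma^{(k-1)}$ is a sutured Heegaard surface of $M^{(k-1)}$ of genus $g-\gamma$, where $\gamma$ denotes the genus of $\Sigma\cap R_k$. Since the minimal Heegaard genus of $M^{(k-1)}$ also equals $g$ (same handle count), we must have $\gamma=0$, so $\Sigma\cap R_k$ is an annulus. The induction hypothesis then provides an ambient isotopy of $M^{(k-1)}$ carrying $\Sigma^{(k-1)}$ onto $\Sigma_0^{(k-1)}$.

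It remains to descend this isotopy to $M=M^{(k-1)}\setminus\Int(B_k)$. Both $\Sigma^{(k-1)}$ and $\Sigma_0^{(k-1)}$ meet $\tilde B_k$ in a single properly embedded disk, and these two disks are isotopic rel boundary inside $\tilde B_k$; combining this with the isotopy extension theorem, we may arrange the ambient isotopy to preserve $B_k\subset\tilde B_k$ setwise, and its restriction to $M$ carries $\Sigma$ onto $\Sigma_0$. The main obstacle is precisely this descent step, which requires modifying the $M^{(k-1)}$-isotopy carefully so that it respects the inclusion $M\hookrightarrow M^{(k-1)}$.
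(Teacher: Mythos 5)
Your overall strategy --- induct on the number of removed balls by finding a Haken sphere parallel to $\partial B_k$, cutting it off, capping with a disk, and invoking the inductive hypothesis on $M^{(k-1)}$ --- is genuinely different from the paper's proof, which fixes a system of arcs $\gamma_i$ joining $\partial B_i\cap C_1$ to $P\times\{0\}$, observes that $C_1$ is a regular neighborhood of $\partial_-C_1\cup\bigl(\cup_i\gamma_i\bigr)$, and reduces uniqueness of the splitting to uniqueness of the arc system up to isotopy, proved by sliding strands across the disk that a suture-parallel curve on $\partial B_i$ bounds in $C_2$ (a light bulb trick). Your reduction of the genus count ($\gamma=0$, so $\Sigma\cap R_k$ is an annulus) is correct, and the base case $k=0$ is fine, although you do not need Waldhausen there: once the genus count forces both compression bodies to be products, $C_1$ is a collar of $P\times\{0\}$ and uniqueness of collars already gives the isotopy.

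However, the descent step is a genuine gap, not a technicality to be arranged ``carefully.'' Two things go wrong. First, the claim that $\Sigma_0^{(k-1)}$ meets $\tilde B_k$ in a single properly embedded disk is unjustified: $T_k=\partial\tilde B_k$ was put in Haken position with respect to $\Sigma$, and nothing controls how it meets $P\times\{\frac12\}$. Second, and more seriously, the inductive isotopy $\Phi_t$ of $M^{(k-1)}$ carries $B_k$ to a ball $\Phi_1(B_k)$ sitting in the interior of one of the two standard compression bodies of $\Sigma_0^{(k-1)}$, and $\Phi_1(\Sigma)$ differs from $\Sigma_0^{(k-1)}$ by replacing the disk $\Phi_1(D^*)$ with a tube (the image of the annulus $\Sigma\cap R_k$) running down to $\partial\Phi_1(B_k)$. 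To conclude you must move this configuration, keeping $\Sigma_0^{(k-1)}$ standard, until the ball returns to $B_k$ and the tube becomes the standard annular collar of $e_k$. That tube is a regular neighborhood of an arc joining $\partial\Phi_1(B_k)$ to the Heegaard surface inside a compression body whose negative boundary contains the other sphere-disks; such arcs can a priori knot around the genus of $P$ and link the tubes going to the other $B_i$, so their uniqueness up to isotopy is exactly the statement the paper proves with the crossing-change argument (and which Hensel--Schultens prove in the punctured-ball case). In other words, the step you defer is the entire content of the proposition; the induction does not close without importing an argument of that kind.
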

\begin{proof}
 Let $M=C_1\cup_\Sigma C_2$ be a Heegaard splitting of $M$. Then $\Sigma$ is the positive boundary of $C_1$, whose negative boundary is the union of a copy of $P$ and some $2$--disks. It follows that the genus of $\Sigma$ is at least that of $P$. Hence $\Sigma_0$ defines a minimal genus Heegaard splitting of $M$. We now assume that $g(\Sigma)=g(P)$.
 
 For $i=1,\dots,k$, choose a properly embedded arc $\gamma_i$ in $C_1$ with an end on $\partial B_i\cap C_1$ and the other end on $P\times\{0\}$ (hence the ends of $\gamma_i$ lie on different components of $\partial_-C_1$). Since $C_1$ is obtained from a thickening of $\partial_-C_1$ by gluing $k$ $1$--handles (which make $C_1$ connected), the arcs $\gamma_i$ can be chosen so that $C_1$ is a regular neighborhood of $\partial_-C_1\cup\big(\cup_{i=1}^k\gamma_i\big)$. 
 Hence the uniqueness of the Heegaard splitting will follow from the uniqueness of the collection of arcs $(\gamma_i)$ up to isotopy. We consider here isotopies within similar collections of arcs, which means that the ends are not fixed, but they must remain in $\partial_-C_1$. Such collections of arcs are clearly homotopic, so we need to show that homotopic collections are isotopic. We shall see that we can allow a strand to cross another. Indeed, any point of an arc $\gamma_i$ is the center of a properly embedded disk in $C_1$ whose boundary $c$ is parallel to the component of the suture $s$ which lies on $\partial B_i$. This curve $c$ also bounds a disk $\delta$ in $C_2$. Hence we can slide part of any $\gamma_j$ (including $\gamma_i$) along this disk $\delta$ in order to realize a crossing with $\gamma_i$.
\end{proof}

\begin{theorem} \label{th:doublecompHS}
 Consider a double compression body $M=\big(\sharp(P\times I)\big)\sharp\big(\sharp^k (S^1\times S^2)\big)$, where $P$ is a compact surface which contains no $2$--sphere and $k\geq0$, with the sutured decomposition of $\partial M$ defined above. The sutured manifold $M$ admits a unique minimal genus Heegaard splitting up to isotopy, and this minimal genus is $g(P)+k$.
\end{theorem}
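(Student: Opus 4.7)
The plan is to simultaneously establish the genus bound $g(\Sigma)\geq g(P)+k$ and uniqueness at the minimum by induction on the complexity $\nu := k + c(P)$, where $c(P)$ denotes the number of connected components of $P$. Existence of a splitting of genus $g(P)+k$ comes from the obvious construction: combine the standard Heegaard surface $P_i\times\{\tfrac12\}$ in each prime piece $P_i\times I$ with the standard genus-one splitting of each $S^1\times S^2$ summand, and form their connected sum respecting the sutured structure.

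The base cases correspond to situations where $M$ contains no essential $2$-sphere, namely $M=S^3$ (with its trivial genus-zero splitting, by Alexander's theorem) and $M=P\times I$ with $P$ connected, the latter handled directly by Proposition~\ref{prop:HSpuncproduct} with no balls removed. In every other case, Lemma~\ref{lemma:compirr} shows that $M$ is a nontrivial connected sum of irreducible pieces $P_i\times I$ and copies of $S^1\times S^2$, and therefore contains an essential embedded $2$-sphere. Because $P$ has no $S^2$ component by the compression body definition, $\partial M$ has none either, so every sutured Heegaard splitting of $M$ is automatically admissible. Theorem~\ref{th:StrongHaken} then furnishes a Haken sphere $S\subset M$ meeting $\Sigma$ in a single circle $c$, and by choosing the initial essential sphere wisely I can arrange $S$ to be non-separating whenever $k\geq 1$. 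Cutting $M$ along $S$ and capping the new sphere boundaries with $3$-balls yields a double compression body $M'$ of strictly smaller complexity, while simultaneously cutting $\Sigma$ along $c$ and capping with disks produces an induced sutured Heegaard splitting of $M'$ whose genus is $g(\Sigma)-1$ (non-separating case) or decomposes additively as $g(\Sigma_1)+g(\Sigma_2)=g(\Sigma)$ over the two components (separating case). The inductive hypothesis applied to $M'$ then yields the bound $g(\Sigma)\geq g(P)+k$, and at equality the induced splitting is isotopic to the standard one on each resulting piece.

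It remains to reassemble the original splitting of $M$ from the standard splittings on the pieces of $M'$, which I expect to be the main obstacle. Each capping $3$-ball in $M'$ meets the Heegaard surface in a single disk, and a uniqueness statement for such ball--disk configurations (in the spirit of the arc uniqueness argument at the end of Proposition~\ref{prop:HSpuncproduct}) should guarantee that any two such capping configurations differ by an ambient isotopy preserving the Heegaard surface. Regluing then recovers the standard splitting of $M$ up to isotopy. The delicate point is precisely to verify that this regluing operation is well-defined at the level of isotopy classes of splittings, so that the inductively standard splittings on the simpler pieces really do reassemble into the standard splitting of $M$, and not merely into some other splitting of minimal genus.
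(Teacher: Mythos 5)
Your overall strategy --- existence by connected sum of standard splittings, a genus bound and a reduction via Theorem~\ref{th:StrongHaken}, and Proposition~\ref{prop:HSpuncproduct} for the irreducible pieces --- is the same as the paper's, and your observations that admissibility is automatic (no $S^2$ in $\partial M$) and that the sphere can be taken non-separating when $k\geq1$ are correct. The genuine gap is exactly the one you flag in your last paragraph, and it is not a deferrable technicality: it is the crux of the argument for a whole family of cases. By capping the new sphere boundaries with $3$--balls after cutting along a Haken sphere, you leave the ambient manifold and must then recover the original splitting from the capped one. That requires showing that each capping ball $B$, together with the disk $\hat\Sigma\cap B$, sits in standard position with respect to the (inductively standard) capped splitting up to an isotopy \emph{preserving that splitting}, and that these normalizations can be made compatible on the two sides of each regluing sphere. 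Worse, for a punctured product --- e.g.\ $M=(F\times I)\,\sharp\, B^3$ with $F$ connected, which occurs whenever $P$ has a disk component and also arises as an intermediate stage --- every essential sphere is peripheral, so your cut-and-cap step merely returns $F\times I$ plus an $S^3$, and the entire content of uniqueness for $M$ is the unproved regluing claim. As written, the proof is incomplete precisely where the work lies.

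The fix is the paper's, and it is already contained in the statement of Proposition~\ref{prop:HSpuncproduct}: do \emph{not} cap. Cut along the Haken sphere $S$ and declare the circle $S\cap\Sigma$ to be part of the suture of the cut-open manifold, so each new boundary sphere is split into a disk of $\partial_0$ and a disk of $\partial_1$. The result is again a double compression body (with $P$ replaced by $P$ plus two disks, or one disk on each side in the separating case, and $k$ decreased when $S$ is non-separating), and everything stays inside $M$, so an isotopy of the cut splitting respecting the sutured boundary descends to $M$ after a standard adjustment on the spheres. The terminal cases of the induction are then not only $S^3$ and $F\times I$ with $F$ connected, but all double compression bodies containing no non-peripheral essential sphere: $P$ connected, $P$ a connected surface plus a disk, $P$ three disks. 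These are exactly the punctured products $(P\times I)\setminus\Int(\Bb)$ that Proposition~\ref{prop:HSpuncproduct} classifies with $k\geq1$ balls removed, via the arc-system argument you allude to; used in that form, the proposition \emph{is} the ``uniqueness statement for ball--disk configurations'' you are missing, and no capping or regluing is ever needed. (Your induction measure $k+c(P)$ would then also need to be replaced by something like the paper's lexicographic $(g(\Sigma),|\partial\Sigma|)$, since keeping the punctures means $c(P)$ can grow under cutting.)
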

\begin{proof}
 If $F$ is a component of $P$, the minimal genus Heegaard splitting of $F\times I$ is given in Proposition~\ref{prop:HSpuncproduct}. The components $S^1\times S^2$ have a standard genus--$1$ splitting. The connected sum of all these splittings gives a Heegaard splitting of $M$ of genus $g(P)+k$.
 
 Now let $M=C_1\cup_\Sigma C_2$ be any Heegaard splitting of $M$.
 
 First assume that $k>0$. Then, thanks to Theorem~\ref{th:StrongHaken}, there is a non-separating essential sphere $S\subset M$ which is Haken. Cutting along this sphere and adding $S\cap\Sigma$ to the suture produces a Heegaard splitting of $\big(\sharp(P'\times I)\big)\sharp\big(\sharp^{k-1} (S^1\times S^2)\big)$, where $P'$ is the disjoint union of $P$ and two $2$--disks, and the genus has decreased by one. Hence, by an inductive argument, we are led to the case $k=0$. 
 
 We proceed by induction on $(g,n)=(g(\Sigma),|\partial\Sigma|)$ with lexicographic order. The case $(g,n)=(0,0)$ is reduced to $M=S^3$, which has a unique genus--$0$ splitting. For $(g,n)\succ(0,0)$, first assume that $M$ contains an essential $2$--sphere $S$ which is non peripheral ({\em ie} non-parallel to a boundary component). By Theorem~\ref{th:StrongHaken}, we can assume that $S$ is Haken. Hence, cutting along $S$ provides two double compression bodies, with Heegaard splittings whose genera add up to give $g(\Sigma)$, and for both the value of $(g,n)$ has strictly decreased. We are led to the case when $M$ contains no essential non-peripheral $2$--sphere, which gives three possibilities: $P$ is connected, $P$ is the disjoint union of a connected surface and a disk, or $P$ is the disjoint union of three disks. All three cases are covered by Proposition~\ref{prop:HSpuncproduct}.
\end{proof}

In \cite{Wald}, Waldhausen classified the Heegaard splittings of the $3$--sphere. 
In \cite{ST}, Scharlemann and Thompson classified the Heegaard splittings of a product $S\times I$ where $S$ is a closed surface. Together with the strong Haken theorem and the above result, it provides a full classification of Heegaard splittings of double compression bodies, with the sutured decomposition we have fixed. 

\begin{theorem}[Waldhausen]
 Every Heegaard splitting of $S^3$ of positive genus is stabilized.
\end{theorem}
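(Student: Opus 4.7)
The plan is to proceed by induction on the genus $g \geq 1$ of the splitting $S^3 = H_1 \cup_\Sigma H_2$. For the base case $g = 1$, I would apply van Kampen's theorem: the amalgamated-product relation $1 = \pi_1(S^3) = \pi_1(H_1) \ast_{\pi_1(\Sigma)} \pi_1(H_2)$ forces the meridians $m_1, m_2 \subset \Sigma$ of the two solid tori to normally generate $\pi_1(\Sigma) \cong \mathbb{Z}^2$; since this group is abelian, they in fact generate it and hence form a basis. Their algebraic intersection is therefore $\pm 1$, and after isotopy they meet transversely in exactly one point. The meridian disks they bound thus form a canceling pair, exhibiting the splitting as stabilized.

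For the inductive step with $g \geq 2$, the strategy is to locate a \emph{reducing sphere} $R \subset S^3$: an embedded $2$-sphere meeting $\Sigma$ transversely along a single essential simple closed curve $c$, with each component $R \cap H_i$ an embedded disk $D_i \subset H_i$. Such an $R$ separates $S^3$ into two $3$-balls $B_1, B_2$, with $\Sigma_i := \Sigma \cap B_i$ a subsurface of $\Sigma$ with connected boundary $c$; essentiality of $c$ on $\Sigma$ forces $g_i := g(\Sigma_i) \geq 1$, and $g_1 + g_2 = g$. Capping each $B_i$ with an exterior $3$-ball bisected by a disk along $c$ yields Heegaard splittings of $S^3$ of genera $g_i < g$, to which the inductive hypothesis applies. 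A canceling pair in a capped piece can be isotoped into the interior of the corresponding $B_i$ using irreducibility of handlebodies and a standard innermost-disk argument, hence descends to a canceling pair for the original splitting and exhibits it as stabilized.

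The main obstacle is the construction of the reducing sphere. The strong Haken Theorem~\ref{th:StrongHaken} of the present paper does not apply, since $S^3$ contains no essential $2$-spheres. Following Waldhausen, one exploits the simple connectivity of $S^3$ via a subtle application of the loop theorem: from a chosen compressing disk $D_1 \subset H_1$ one modifies and surgers it, using boundary-compressions along $\Sigma$ and innermost-disk exchanges, until its boundary becomes null-homotopic in $H_2$; at that point Dehn's lemma produces an embedded disk $D_2 \subset H_2$ with $\partial D_2 = \partial D_1$, and one verifies that $c := \partial D_1$ is essential on $\Sigma$ when $g \geq 2$. Showing that this iterative procedure terminates and correctly yields an essential reducing curve (rather than degenerating to an inessential intersection, as happens automatically for canceling-pair tubes in genus $1$) is the technical heart of the argument and absorbs most of Waldhausen's original paper.
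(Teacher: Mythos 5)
The paper offers no proof of this statement: it is quoted as a classical theorem and attributed to Waldhausen's paper \cite{Wald}, so there is no argument of the author's to compare yours against. Judged on its own, your proposal sets up the standard inductive skeleton correctly. The genus--$1$ base case is fine (the homological argument forcing the two meridians to form a basis of $H_1(\Sigma)\cong\Z^2$, hence to have geometric intersection number one after isotopy, is the usual one), and the reduction of the inductive step to the existence of a reducing sphere, followed by capping off and amalgamating, is the right outline.

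The genuine gap is that the existence of the reducing sphere \emph{is} the theorem, and you do not prove it. For a genus $g\geq 2$ splitting of $S^3$ one needs an essential curve $c\subset\Sigma$ bounding embedded disks in both handlebodies; since $S^3$ is irreducible, neither Haken's lemma (Theorem~\ref{th:Haken}) nor the strong Haken theorem (Theorem~\ref{th:StrongHaken}) produces such a sphere --- those results start from an essential sphere in the ambient manifold, and $S^3$ has none. Your paragraph on this step describes Waldhausen's strategy in two sentences (``modify a compressing disk until its boundary is null-homotopic on the other side, apply Dehn's lemma'') and then explicitly concedes that the termination of the procedure and the essentiality of the resulting curve are ``the technical heart of the argument.'' Deferring the heart of the proof to the reference being proved is not a proof; as written, you have reduced Waldhausen's theorem to an essentially equivalent statement (reducibility of every positive-genus splitting of $S^3$) and left that statement unestablished. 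If you want a genuinely self-contained argument, the known routes --- Waldhausen's original one, or the thin-position proof of Scharlemann--Thomason, or the Rubinstein--Scharlemann graphic --- each require several pages of new ideas that your sketch does not supply. A secondary, smaller gap: in the inductive step you should justify more carefully that a canceling pair for a capped-off piece can be chosen disjoint from the capping ball, i.e.\ that stabilization persists under the amalgamation along $R$; this is standard but is currently only gestured at.
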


This result and the Haken theorem provide a classification of Heegaard splittings of $\sharp^g(S^1\times S^2)$ up to diffeomorphisms. As explained in \cite{HS}, one can get a classification up to isotopy, using the strong Haken theorem and the classification of genus--$0$ splittings of a punctured $3$--ball (\cite[Theorem~3.3]{HS}, also a particular case of Proposition~\ref{prop:HSpuncproduct}). This is recovered in Theorem~\ref{th:doublecompHSfull} below (with $P$ empty). 

\begin{theorem}[Scharlemann--Thompson] \label{th:ST}
 Let $M=P\times I$, where $P$ is a compact connected oriented surface of genus $g$, with the sutured decomposition of $\partial M$ defined above. Every Heegaard splitting of genus $n>g$ of $M$ is stabilized.
\end{theorem}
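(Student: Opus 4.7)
The plan is to prove the theorem by induction on $n=g(\Sigma)$, reducing to the uniqueness of the genus--$g$ splitting established in Proposition~\ref{prop:HSpuncproduct}. It suffices to exhibit a destabilization of any splitting with $n>g$, that is, a pair of compressing disks $D_i\subset C_i$ with $|\partial D_1\cap\partial D_2|=1$; removing such a pair lowers the genus by one and closes the induction.

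The first step is to apply the Casson--Gordon dichotomy: the splitting $M=C_1\cup_\Sigma C_2$ is either weakly reducible (there are disjoint compressing disks on opposite sides) or strongly irreducible. In the weakly reducible case, I would perform maximal simultaneous compressions to untelescope the splitting into a generalized Heegaard splitting whose thin levels are incompressible properly embedded surfaces in $P\times I$ with boundary on the suture $\partial P\times\{\tfrac12\}$. A standard argument, using the irreducibility of $P\times I$ and the incompressibility of $P$, shows that any such surface is a union of disks and parallel copies of $P\times\{t\}$. The non-disk thin levels cut $M$ into sub-products; applying Theorem~\ref{th:doublecompHS} inductively to each sub-splitting shows it is standard, and reassembling exhibits the original splitting as a sequence of stabilizations of the standard one.

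The hard part will be ruling out strongly irreducible splittings of genus $n>g$. I would use a Rubinstein--Scharlemann graphic: the splitting produces a sweepout $\{\Sigma_t\}_{t\in[0,1]}$ between spines of $C_1$ and $C_2$, while the product structure of $M$ provides a second sweepout by the incompressible surfaces $\{P\times\{s\}\}_{s\in[0,1]}$. Analyzing the graphic of their generic intersection pattern in $[0,1]^2$, strong irreducibility together with the incompressibility of every $P\times\{s\}$ would force the existence of $(t_0,s_0)$ at which $\Sigma_{t_0}$ is isotopic to $P\times\{s_0\}$, contradicting $g(\Sigma)=n>g$.

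The principal technical obstacle is making the sweepout argument rigorous in the sutured setting: throughout both sweepouts one must control the behavior of the surfaces near the suture $\partial P\times\{\tfrac12\}$, so that any compressing disks extracted from the graphic are genuine disks in the sutured compression bodies rather than half-disks at the boundary. Handling this boundary bookkeeping carefully is the content of Scharlemann--Thompson's argument adapted from the closed case to surfaces with boundary.
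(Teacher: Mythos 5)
There is a genuine gap: your proposal sets out to reprove the Scharlemann--Thompson theorem itself, and the step that carries all the weight --- ruling out strongly irreducible splittings of genus $n>g$ via a Rubinstein--Scharlemann sweepout graphic --- is only described, not carried out. You acknowledge this yourself when you write that making the sweepout argument rigorous ``is the content of Scharlemann--Thompson's argument''; but that content \emph{is} the theorem, so deferring it leaves nothing proved. The weakly reducible branch also has problems as written: after untelescoping you invoke Theorem~\ref{th:doublecompHS}, which asserts uniqueness of the \emph{minimal genus} splitting, whereas what you need is that the amalgamation of the standard sub-splittings is a stabilization of the standard splitting of $P\times I$; that requires an amalgamation argument you do not supply, and the claim that the thin levels are disks and horizontal copies of $P\times\{t\}$ needs the classification of incompressible, $\partial$-incompressible surfaces in the sutured product, plus a reason the thin levels are $\partial$-incompressible.

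The paper takes a much shorter route: it treats the closed case as known, citing Scharlemann--Thompson \cite{ST} (which is why the theorem carries their name), and reduces the bounded case to it. Concretely, for each boundary component $c$ of $P$ one fills the vertical annulus $c\times I$ with a solid tube, turning $M$ into $\widehat{P}\times I$ with $\widehat{P}$ closed, and caps off the Heegaard surface along $c\times\{\tfrac12\}$ with a disk; the closed case gives a destabilizing pair of disks, which can be arranged to miss the added tubes, so the original splitting was already stabilized. If you want a self-contained argument you would have to actually execute the sweepout analysis (or Scharlemann--Thompson's original compression argument); otherwise the efficient move is to quote the closed case and give the filling reduction.
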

\begin{proof}
 This is proven in \cite{ST} for $P$ closed. The general case is deduced as follows. For each component $c$ of~$\partial P$, we fill in $c\times I$ with a solid tube and we cap off the Heegaard surface with a disk. The closed case tells us that the splitting we obtain is stabilized. The solid tubes we added do not interact with the stabilization, so the original Heegaard splitting was already stabilized.
\end{proof}

\begin{theorem} \label{th:doublecompHSfull}
 Consider a double compression body $M=\big(\sharp(P\times I)\big)\sharp\big(\sharp^k (S^1\times S^2)\big)$, where $P$ is a compact surface which contains no $2$--sphere and $k\geq0$, with the sutured decomposition of $\partial M$ defined above. For all $n\geq g(P)+k$, the sutured manifold $M$ admits a unique Heegaard splitting of genus $n$ up to isotopy.
\end{theorem}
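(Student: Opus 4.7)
The plan is to proceed by induction on $n$, with the base case $n=g(P)+k$ furnished by Theorem~\ref{th:doublecompHS}. For $n>g(P)+k$ and a sutured Heegaard splitting $M=C_1\cup_\Sigma C_2$ of genus $n$, the goal is to show that $\Sigma$ is isotopic to a stabilization of a genus--$(n-1)$ splitting. Since the inductive hypothesis will supply uniqueness of that lower-genus splitting, and stabilization of a fixed sutured Heegaard splitting is well-defined up to isotopy at each genus, this will yield uniqueness at genus $n$.

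Following the argument of Theorem~\ref{th:doublecompHS}, I would first reduce to the case $k=0$ with $P$ connected (possibly plus one or two disk components). If $k>0$, Theorem~\ref{th:StrongHaken} provides a non-separating surviving Haken sphere $S$; cutting along $S$ and adding $S\cap\Sigma$ to the suture yields a sutured Heegaard splitting of $\big(\sharp(P'\times I)\big)\sharp\big(\sharp^{k-1}(S^1\times S^2)\big)$, with $P'=P\sqcup D^2\sqcup D^2$, of genus $n-1$ strictly greater than the minimal genus $g(P')+(k-1)=g(P)+k-1$. The inductive hypothesis applies to show this smaller splitting is the unique one of its genus, hence a stabilization of the minimal splitting, and re-gluing along $S$ transports this conclusion back to $\Sigma$. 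Disconnected $P$ is handled analogously by cutting along essential non-peripheral separating Haken spheres, reducing each side to a smaller instance of the theorem.

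In the remaining base configuration ($k=0$, $P$ connected and possibly plus disks), it remains to show that every Heegaard splitting of genus $n>g(P)$ is stabilized. When no disk component is present, this is exactly Theorem~\ref{th:ST}. In general, fill each $S^2$ boundary component of $M$ by a $3$-ball to obtain $P_0\times I$ with $P_0$ a closed surface of the same genus; by Theorem~\ref{th:ST} the resulting Heegaard splitting is stabilized, and by an innermost disk argument exploiting Lemma~\ref{lemma:compirr}, the destabilizing pair of compressing disks can be isotoped to avoid the filling balls, so that removing the balls recovers a destabilization of the original sutured splitting. Then $\Sigma$ is a stabilization of a genus--$(n-1)$ splitting, which the inductive hypothesis identifies uniquely up to isotopy, completing the induction.

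The main obstacle is this last step, both in extending Scharlemann--Thompson to the punctured sutured setting and in verifying that the admissibility hypothesis of Theorem~\ref{th:StrongHaken} is preserved throughout the Haken-sphere cutting of the reduction step, so that the strong Haken theorem may be reapplied at each level of the induction. Once these technicalities are handled, the overall scheme is a direct adaptation of the minimal-genus argument of Theorem~\ref{th:doublecompHS}, with stabilization playing the role that the minimal-genus normal form played there.
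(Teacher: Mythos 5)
Your proposal follows essentially the same route as the paper: reduce via the strong Haken theorem (Theorem~\ref{th:StrongHaken}) to connected products $F\times I$, fill the sphere boundary components with balls containing disks capping the Heegaard surface, invoke Scharlemann--Thompson (Theorem~\ref{th:ST}, which in the paper's formulation already allows $\partial F\neq\emptyset$, so your surface $P_0$ need not be closed) to conclude the splitting is stabilized, and then combine uniqueness of the minimal genus splitting with well-definedness of stabilization. The technicalities you flag are handled exactly as you suggest: admissibility persists throughout because every $S^2$ boundary component meets $\Sigma$ in a single suture circle, a property preserved when one cuts along a Haken sphere and adds $S\cap\Sigma$ to the suture.
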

\begin{proof}
 Assume $n>g(P)+k$. We need to prove that, in this case, the Heegaard splitting is stabilized. As in the proof of Theorem~\ref{th:doublecompHS}, cutting along non-separating Haken spheres, we reduce to problem to the case $k=0$.
 
 Thanks to Theorem~\ref{th:StrongHaken}, there are Haken spheres which realize $M$ as the connected sum of some products $F\times I$ with $F$ a connected surface. We cut along these Haken spheres and then fill in each created boundary with a $3$--ball containing a properly embedded $2$--disk which caps the Heegaard surface. We obtain a Heegaard splitting on each $F\times I$, one of which has a genus greater than $g(F)$. By Theorem~\ref{th:ST}, this splitting is stabilized, so that our initial splitting of $M$ was already stabilized.
\end{proof}

\section{Diffeomorphisms of $4$--dimensional compression bodies and relative trisections}
\label{secLPrel}

\subsection{A relative Laudenbach--Poénaru's theorem}

In this section, we adapt our proof of Laudenbach--Poénaru's theorem to a relative setting. We had two preliminaries for this proof. The first one is the classification of minimal genus splittings of double handlebodies. We generalized it in Theorem~\ref{th:doublecompHS} to double compression bodies. The second one is the fact that every diffeomorphism of a $3$--dimensional handlebody which is the identity on the boundary is isotopic to the identity. We now generalize it to compression bodies.

\begin{lemma}
 Let $C$ be a compression body. Every two defining disk systems for $C$ which coincide on the boundary are isotopic.
\end{lemma}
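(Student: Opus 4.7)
The plan is to mimic the argument used for $3$--dimensional handlebodies earlier in the paper, now using Lemma~\ref{lemma:compirr} (irreducibility of compression bodies) in place of irreducibility of handlebodies. The overall structure is a standard innermost-disk / innermost-ball argument, carried out in two stages.

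First, using a collar of $\partial_+ C$ together with the assumption $\partial\Dd=\partial\Dd'$, I would isotope $\Dd'$ so that it coincides with $\Dd$ in this collar and is transverse to $\Dd$ in the interior. Then $(\Dd\cap\Dd')\setminus\partial\Dd$ consists of finitely many disjoint simple closed curves in $\Int(C)$. A circle that is innermost on $\Dd'$ bounds a disk $\delta'\subset\Dd'$ whose interior is disjoint from $\Dd$; together with the subdisk of $\Dd$ that it cuts off, this forms an embedded $2$--sphere in $C$, which by Lemma~\ref{lemma:compirr} bounds a $3$--ball. This ball supports an isotopy of $\Dd'$ that removes the intersection circle. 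Iterating, I arrange $\Dd\cap\Dd'=\partial\Dd$.

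At that point each component $\gamma$ of $\partial\Dd=\partial\Dd'$ is the boundary of a unique disk $D_\gamma\in\Dd$ and of a unique disk $D'_\gamma\in\Dd'$, and the spheres $S_\gamma=D_\gamma\cup D'_\gamma$ are pairwise disjoint embedded $2$--spheres in $C$, since distinct disks within $\Dd$ (respectively within $\Dd'$) are disjoint and the only remaining intersections between $\Dd$ and $\Dd'$ lie on $\partial\Dd$. Invoking Lemma~\ref{lemma:compirr} again, each $S_\gamma$ bounds a $3$--ball $B_\gamma$, and the disjointness of the $S_\gamma$ forces the $B_\gamma$ to be either nested or disjoint. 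Picking $\gamma$ so that $B_\gamma$ is innermost, the interior of $B_\gamma$ meets no other disk of $\Dd\cup\Dd'$, so one can use $B_\gamma$ as the support of an ambient isotopy that fixes $\partial_+C$ pointwise and sends $D'_\gamma$ onto $D_\gamma$. Iterating on the remaining pairs gives the desired isotopy from $\Dd'$ to $\Dd$.

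The step I expect to require the most care is the innermost-ball selection: one must check that the isotopies used to reach $\Dd\cap\Dd'=\partial\Dd$ and, afterwards, to slide $D'_\gamma$ across $B_\gamma$ all fix $\partial_+C$ pointwise, so that the configuration assembled in the first stage is preserved as the pairs are processed one by one. Beyond this bookkeeping, the whole argument is parallel to the handlebody case, with Lemma~\ref{lemma:compirr} providing the irreducibility needed both for the innermost-disk step and for the innermost-ball step.
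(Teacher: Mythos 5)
Your proof is correct and follows exactly the route the paper intends: the paper's own proof consists of citing Lemma~\ref{lemma:compirr} and invoking ``a standard innermost disk argument,'' which is precisely the two-stage innermost-circle/innermost-ball argument you spell out.
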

\begin{proof}
 We have proved in Lemma~\ref{lemma:compirr} that compression bodies are irreducible. Hence a standard innermost disk argument concludes.
\end{proof}

\begin{lemma} \label{lemma:diffeocompbody}
 Let $C$ be a compression body. Let $\varphi$ be a diffeomorphism of $C$. If $\varphi$ is the identity on $\partial_+ C$, then $\varphi$ is isotopic to the identity, relative to the positive boundary.
\end{lemma}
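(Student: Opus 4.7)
The plan is to imitate the proof of Lemma~\ref{lemma:diffeohandlebody}, adapting it to handle the additional negative boundary. First, pick a defining disk system $\Dd$ for $C$. Since $\varphi$ is the identity on $\partial_+C$, the image $\varphi(\Dd)$ is another defining disk system with the same boundary, and by the preceding lemma it is isotopic to $\Dd$. By the isotopy extension theorem (rel $\partial_+C$), I may isotope $\varphi$ so that $\varphi(\Dd)=\Dd$; since every diffeomorphism of a $2$--disk identity on the boundary is isotopic to the identity, I may further assume that $\varphi$ is the identity on $\partial_+C\cup\Dd$.

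At this point $\varphi$ descends to a diffeomorphism of $N:=C\ca\Dd$. By the definition of a defining disk system, $N$ is either a $3$--ball (when $\partial_-C=\emptyset$) or a thickening $P\times I$ of $P:=\partial_-C$. In the first case, $\varphi$ is the identity on $\partial N$ and Cerf's theorem concludes. Otherwise $\varphi$ is the identity only on the upper portion $Q:=P\times\{1\}\cup\partial P\times I$ of $\partial N$, but may act non-trivially on $P\times\{0\}=\partial_-C$. This is where the argument departs from the handlebody case and becomes the main obstacle.

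To handle this second case, I plan to reduce it to the ball case by further cutting along spanning surfaces associated with a cut system for $P$. Choose essential, pairwise disjoint, properly embedded arcs and simple closed curves $\{c_1,\ldots,c_n\}$ in $P$ such that $N\ca\bigcup_i c_i\times I$ is a $3$--ball, and set $A_i:=c_i\times I$. The aim is then to isotope $\varphi$, rel $Q$, so that $\varphi(A_i)=A_i$ and afterwards $\varphi|_{A_i}=\mathrm{id}$; Cerf's theorem applied to the resulting $3$--ball $N\ca\bigcup_i A_i$ will then conclude.

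The crucial ingredient is that $\psi:=\varphi|_{P\times\{0\}}$ preserves each isotopy class $[c_i]$ in $P$. This follows from a $\pi_1$ argument: the inclusion $\partial_+C\hookrightarrow C$ is surjective on fundamental groups, so the condition $\varphi|_{\partial_+C}=\mathrm{id}$ forces $\varphi_*=\mathrm{id}$ on $\pi_1(C)$; combined with the injectivity of $\pi_1(\partial_-C)\hookrightarrow\pi_1(C)$, this yields $\psi_*=\mathrm{id}$ on $\pi_1(P)$ (up to inner automorphism). Hence $\psi(c_i)$ is freely homotopic to $c_i$ in $P$, and by Baer's theorem it is isotopic to $c_i$. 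Iterated isotopy extensions (in $P$, then through a collar of $P\times\{0\}$ in $N$), together with the irreducibility of $N$ (Lemma~\ref{lemma:compirr}) and standard innermost disk/bigon arguments, allow one to successively isotope $\varphi$ rel $Q$ so that $\varphi(A_i)=A_i$ setwise, then $\varphi|_{\partial A_i}=\mathrm{id}$, and finally $\varphi|_{A_i}=\mathrm{id}$, exactly as in the handlebody argument. Cerf's theorem then completes the proof.
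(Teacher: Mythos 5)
Your reduction to $\varphi=\mathrm{id}$ on $\partial_+C\cup\Dd$ matches the paper exactly, but from there the two arguments diverge. The paper stops cutting at $C\ca\Dd\cong\partial_-C\times I$: the induced diffeomorphism is the identity on $\partial_-C\times\{1\}$, and the paper concludes directly by ``interpolating with the identity'' along the $I$--factor, i.e.\ by the product/collar argument showing that a diffeomorphism of $\partial_-C\times I$ fixing the top end pointwise is isotopic to the identity; Cerf's theorem is only invoked when $\partial_-C=\emptyset$. You instead keep cutting, along the vertical annuli and squares $A_i=c_i\times I$ over a cut system for $P=\partial_-C$, down to a ball, and apply Cerf again. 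Your $\pi_1$ argument for $\psi=\varphi|_{P\times\{0\}}$ is sound, and in fact yields the stronger conclusion that $\psi$ is isotopic to $\mathrm{id}_P$ (since $P$ has no sphere components); you should use that stronger statement, because preserving each class $[c_i]$ separately is not quite enough to normalize the whole configuration $\bigcup_i A_i$ simultaneously.

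The one place where your plan is genuinely not ``exactly as in the handlebody argument'' is the final normalization over closed curves $c_i$: there $A_i$ is an annulus, and $\pi_0\,\mathrm{Diff}(S^1\times I\ \mathrm{rel}\ \partial)\cong\Z$ is generated by the Dehn twist, so after arranging $\varphi(A_i)=A_i$ and $\varphi|_{\partial A_i}=\mathrm{id}$ you cannot conclude that $\varphi|_{A_i}$ is isotopic to the identity rel $\partial A_i$, as you could for disks. You must exploit the freedom you still have on $P\times\{0\}$ (the isotopy is only required to be rel $Q$) to rotate $c_i\times\{0\}$ and cancel the twist; a similar unwinding is needed to see that $\varphi(A_i)$ is isotopic to $A_i$ rel its top boundary in the first place, since essential annuli in a product can differ by such twisting. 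Finally, the ball $N\ca\bigcup_iA_i$ still contains the disk $D^2\times\{0\}$ coming from $P\times\{0\}$ in its boundary, where $\varphi$ is not yet the identity, so one more (easy) normalization of a disk diffeomorphism rel boundary is needed before Cerf applies. With these points addressed your proof is correct; it is longer than the paper's one-line finish, but it makes the role of Cerf's theorem uniform in both the empty and non-empty $\partial_-C$ cases.
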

\begin{proof}
 Pick a defining disk system $\Dd$ for $C$. By the same reasoning as in the proof of Lemma~\ref{lemma:diffeohandlebody}, we can assume that $\varphi$ is the identity on $\partial_+C\cup\Dd$. We are led to a diffeomorphism of a product $\partial_-C\times I$ which is the identity on $\partial_-C\times\{1\}$. Interpolating with the identity provides the required isotopy.
\end{proof}

\begin{definition}
 A \emph{(lensed) hyper compression body} $V$ is a smooth connected manifold constructed as follows:
 \begin{itemize}
  \item start with $M\times I$ where $M$ is a compact oriented $3$--manifold,
  \item glue $4$--dimensional $1$--handles along $M\times\{1\}$,
  \item collapse the vertical boundary along the $I$--factor.
 \end{itemize}
 The {\em negative boundary} $\partial_-V$ is defined as $M\times\{0\}$ and the {\em positive boundary} is $M\times\{1\}$, so that $\partial V=\partial_-V\cup_\partial\partial_+V$. 
\end{definition}

We will consider hyper compression bodies with a specific condition on the negative boundary. We say that a hyper compression body $V$ is {\em $P$--based} if $\partial_-V$ is a trivial compression body $P\times I$, where $P$ is a compact oriented surface. Note that it comes with a sutured decomposition of its boundary, defined as in Section~\ref{sec:doublecomp}. Further, the positive boundary is diffeomorphic to a double compression body $\big(\sharp(P\times I)\big)\sharp\big(\sharp^k (S^1\times S^2)\big)$, again with a sutured structure.

\begin{theorem} \label{th:LPrel}
 Let $V$ be a $P$--based hyper compression body. Assume that $P$ contains no $2$--sphere. Then every diffeomorphism of $\partial_+V$ extends to a diffeomorphism of $V$.
\end{theorem}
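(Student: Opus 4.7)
The strategy mirrors the proof of Theorem~\ref{th:LP}, with $3$-dimensional handlebodies replaced by $3$-dimensional compression bodies and Heegaard splittings of double handlebodies replaced by sutured Heegaard splittings of double compression bodies.

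First I would fix a product structure on $V$. By pushing the $4$-dimensional $1$-handles of the hyper compression body structure into a collar of $\partial_-V$, one identifies $V$ with $C\times I$, where $C$ is a $3$-dimensional compression body with $\partial_-C=P$, whose genus $k$ equals the number of $1$-handles in $V$, and where the face $\Sigma\times I$ (with $\Sigma=\partial_+C$ a closed surface) is collapsed along the $I$-factor while the face $\partial_-C\times I=P\times I$ is left untouched. Under this identification $V$ is foliated by $3$-dimensional compression bodies $C_t=C\times\{t\}$, $t\in[0,1]$, all sharing their positive boundary $\Sigma$, and $\partial_+V=C_0\cup_\Sigma C_1$ is a sutured Heegaard splitting of $\partial_+V$ of minimal genus $g(P)+k$ by Theorem~\ref{th:doublecompHS}.

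Given a diffeomorphism $\varphi$ of $\partial_+V$ (compatible with the natural sutured decomposition of its boundary), its image defines a new sutured Heegaard splitting $\partial_+V=\varphi(C_0)\cup_{\varphi(\Sigma)}\varphi(C_1)$ of the same minimal genus. By Theorem~\ref{th:doublecompHS} the two are isotopic. Realizing the isotopy in a collar neighborhood of $\partial_+V$ in $V$, I may modify $\varphi$ so as to assume that $\varphi(\Sigma)=\Sigma$, $\varphi(C_0)=C_0$, and $\varphi(C_1)=C_1$.

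Under the identifications $C_0\cong C\cong C_1$, $\varphi$ then restricts to self-diffeomorphisms $\varphi_0,\varphi_1$ of $C$ that agree on $\partial_+C=\Sigma$. Applying Lemma~\ref{lemma:diffeocompbody} to $\varphi_1\circ\varphi_0^{-1}$ yields an isotopy $\psi_t$ in $C$ from the identity to $\varphi_1\circ\varphi_0^{-1}$; inspection of the proof of that lemma shows that $\psi_t$ can be chosen to fix $\partial_+C$ throughout. Setting $\varphi_t=\psi_t\circ\varphi_0$ produces an isotopy from $\varphi_0$ to $\varphi_1$ with $\varphi_t|_\Sigma$ constant in $t$, so that the formula $(x,t)\mapsto(\varphi_t(x),t)$ on $C\times I$ descends to the desired extension $\Phi:V\to V$ of $\varphi$.

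The heavy lifting is done by Theorem~\ref{th:doublecompHS}, which provides the uniqueness of minimal genus sutured Heegaard splittings used in the second paragraph; this is where the hypothesis that $P$ contains no $2$-sphere is crucially used (it enters through Lemma~\ref{lemma:compirr}). The remaining technical points are to verify that the isotopy from Lemma~\ref{lemma:diffeocompbody} can indeed be arranged to fix $\partial_+C$ throughout, which is essential for the interpolation to survive the collapse along $\Sigma\times I$, and to set up the product decomposition $V\cong C\times I$ so that it induces the natural sutured structure on $\partial_+V$.
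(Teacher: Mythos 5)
Your proposal is correct and follows essentially the same route as the paper: the product structure $V\cong C\times I$ with the positive boundary face collapsed, the uniqueness of the minimal genus sutured Heegaard splitting of $\partial_+V$ from Theorem~\ref{th:doublecompHS}, and Lemma~\ref{lemma:diffeocompbody} to interpolate between the two restrictions of $\varphi$. Your extra care that the isotopy from Lemma~\ref{lemma:diffeocompbody} fixes $\partial_+C$ throughout is a worthwhile point that the paper leaves implicit.
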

\begin{proof}
 Like in the proof of Theorem~\ref{th:LP}, we see that there is a foliation of $V$ by compression bodies~$C_t$, $t\in[0,1]$, which intersect along their positive boundary $\Sigma=\partial_+C_t$, such that $C_0=\partial_0(\partial_+V)$ and $C_1=\partial_1(\partial_+V)$. We conclude with the very same argument, using Theorem~\ref{th:doublecompHS} instead of Theorem~\ref{th:CO} and Lemma~\ref{lemma:diffeocompbody} instead of Lemma~\ref{lemma:diffeohandlebody}.
\end{proof}

\subsection{Diagrams of $4$--dimensional multisections}

In this section, we apply Theorem~\ref{th:LPrel} to the problematic of diagrams in the setting of relative multisections in the sense of Islambouli--Naylor \cite{IN}, a generalization of Gay and Kirby's trisections \cite{GayKirby}.

\begin{definition}\label{def:Multisection}
 A \emph{multisection} of a compact oriented $4$--manifold $X$ is a decomposition $X=\cup_{i=1}^nX_i$ with the following properties (all arithmetic involving indices is mod $n$):
 \begin{enumerate}
  \item $\displaystyle\Sigma=\bigcap_{i=1}^n X_i$ is a compact connected oriented surface,
  \item when $|i-j|>1$, $X_i\cap X_j=\Sigma$.
  \item $C_i=X_i\cap X_{i+1}$ is a $3$--dimensional compression body satisfying $\partial_+C_i=\Sigma$ and $\partial_-C_i=C_i\cap \partial X$,
  \item there is a compact oriented surface $P$, which contains no $S^2$, such that each $X_i$ is a $P$--based hyper compression body with $\partial_+X_i=C_{i-1}\cup_\Sigma C_i$ and $\partial_-X_i=X_i\cap \partial X$, and the natural sutured decomposition of $\partial(\partial_-X_i)$ coincides with the decomposition $\partial(\partial_-X_i)=\partial_-C_{i-1}\cup\partial_-C_i$,
  \item the surface $\Sigma$ is smoothly properly embedded in $X$, the $C_i$ are submanifolds with corners whose codimension--$2$ stratum is $\partial\Sigma$, the $X_i$ are submanifolds with corners, whose codimension--$2$ stratum is $\Sigma\cup\partial_-C_{i-1}\cup\partial_-C_i$ and whose codimension--$3$ stratum is $\partial\Sigma$.
 \end{enumerate}
 A multisection is called a \emph{trisection} when $n=3$.
\end{definition}

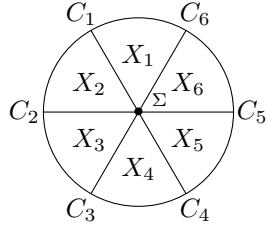
\begin{figure}[htb]
\begin{center}
\begin{tikzpicture} [scale=0.25]
 \draw (0,0) circle (5);
 \foreach \s in{1,...,5,6} {
 \draw[rotate=60*(\s+1)] (0,0) -- (5,0);
 \draw[rotate=60*(\s+1)] (6,0) node {$C_\s$};
 \draw[rotate=60*\s+30] (3,0) node {$X_\s$};}
 \draw (0,0) node {$\scriptstyle\bullet$} (0.2,-0.2) node[above right] {$\scriptstyle\Sigma$};
\end{tikzpicture}
\end{center}
\caption{Schematic of a multisection}
\label{fig:multisection}
\end{figure}

A {\em diagram} of such a multisection is a tuple $(\Sigma;\alpha_1,\dots,\alpha_n)$ where $\Sigma$ is the central surface of the multisection and $\alpha_i$ is a cut-system for $C_i$. Note that the positive boundaries of the $X_i$ are double compression bodies, so that Theorem~\ref{th:doublecompHSfull} implies that each subdiagram $(\Sigma;\alpha_{i-1},\alpha_i)$ is handleslide diffeomorphic to a diagram as represented in Figure~\ref{fig:StandardDiagram}. 
Note that any abstract diagram satisfying this property is a diagram of some multisected $4$--manifold. When the surface $P$ has no closed component, Castro, Gay and Pinz\'on-Caicedo proved that a multisection diagram determines a unique $4$--manifold up to isotopy \cite{CGPC2}. Theorem~\ref{th:LPrel} allows us to give a simple proof of this fact and to extend it to any surface $P$ containing no $S^2$--component. We will see below that this is optimal.

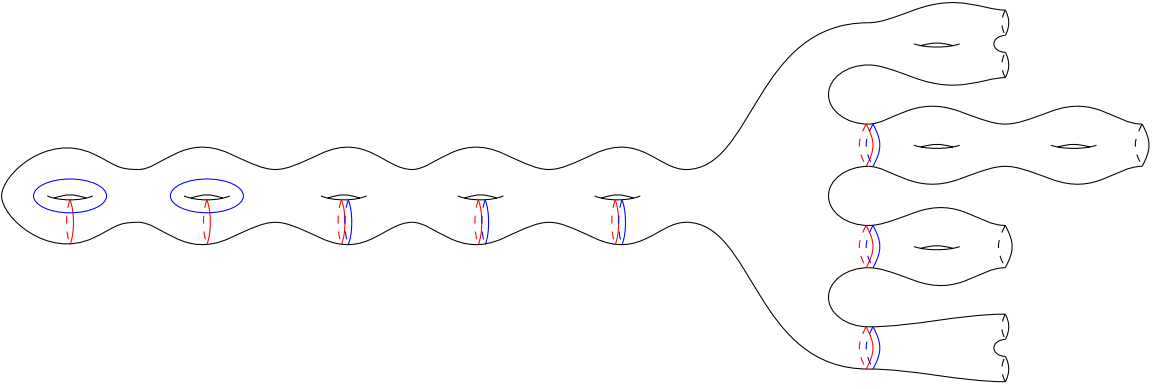
\begin{figure}[htb]
\begin{center}
\begin{tikzpicture} [xscale=0.3,yscale=0.28]
\newcommand{\trou}{
(2,0) ..controls +(0.5,-0.25) and +(-0.5,-0.25) .. (4,0)
(2.3,-0.1) ..controls +(0.6,0.2) and +(-0.6,0.2) .. (3.7,-0.1)}
\draw (0,0) ..controls +(0,1) and +(-2,1) .. (4,2);
\draw (4,2) ..controls +(1,-0.5) and +(-1,0) .. (6,1.25);
\draw[dashed] (6,1.25);
\draw (0,0) ..controls +(0,-1) and +(-2,-1) .. (4,-2);
\draw (4,-2) ..controls +(1,0.5) and +(-1,0) .. (6,-1.25);
\foreach \x/\y in {6/0,18/0} {
\begin{scope} [xshift=\x cm,yshift=\y cm]
\draw (0,1.25) ..controls +(1,0) and +(-2,1) .. (4,2);
\draw (4,2) ..controls +(2,-1) and +(-2,-1) .. (8,2);
\draw (8,2) ..controls +(2,1) and +(-1.2,0) .. (12,1.25);
\draw (0,-1.25) ..controls +(1,0) and +(-2,-1) .. (4,-2);
\draw (4,-2) ..controls +(2,1) and +(-2,1) .. (8,-2);
\draw (8,-2) ..controls +(2,-1) and +(-1.2,0) .. (12,-1.25);
\end{scope}}
\foreach \x in {0,6,12,18,24} {
\draw[xshift=\x cm] \trou;}
\foreach \x in {0,6} {
\draw[color=red,xshift=\x cm] (3,-0.2) ..controls +(0.2,-0.5) and +(0.2,0.5) .. (3,-2.3);
\draw[dashed,color=red,xshift=\x cm] (3,-0.2) ..controls +(-0.2,-0.5) and +(-0.2,0.5) .. (3,-2.3);
\draw[color=blue,xshift=\x cm] (3,0)ellipse(1.6 and 0.8);}
\foreach \x in {11.9,17.9,23.9} {
\draw[color=red,xshift=\x cm] (3,-0.2) ..controls +(0.2,-0.5) and +(0.2,0.5) .. (3,-2.3);
\draw[dashed,color=red,xshift=\x cm] (3,-0.2) ..controls +(-0.2,-0.5) and +(-0.2,0.5) .. (3,-2.3);}
\foreach \x in {12.2,18.2,24.2} {
\draw[color=blue,xshift=\x cm] (3,-0.2) ..controls +(0.2,-0.5) and +(0.2,0.5) .. (3,-2.3);
\draw[dashed,color=blue,xshift=\x cm] (3,-0.2) ..controls +(-0.2,-0.5) and +(-0.2,0.5) .. (3,-2.3);}
\begin{scope}[yscale=0.8]
 \begin{scope} [xshift=38cm,yshift=3 cm]
\draw (0,1.25) ..controls +(1,0) and +(-2,1) .. (4,2);
\draw (4,2) ..controls +(2,-1) and +(-2,-1) .. (8,2);
\draw (8,2) ..controls +(2,1) and +(-1.2,0) .. (12,1.25);
\draw (0,-1.25) ..controls +(1,0) and +(-2,-1) .. (4,-2);
\draw (4,-2) ..controls +(2,1) and +(-2,1) .. (8,-2);
\draw (8,-2) ..controls +(2,-1) and +(-1.2,0) .. (12,-1.25);
\end{scope}
\foreach \x/\y in {38/3,44/3,38/-3,38/9} {
\draw[xshift=\x cm,yshift=\y cm] \trou;}
\foreach \x/\y in {44/-3,50/3}{
\begin{scope} [xshift=\x cm,yshift=\y cm]
\draw (0,1.25) ..controls +(0.4,-1) and +(0.4,1) .. (0,-1.25);
\draw[dashed] (0,1.25) ..controls +(-0.4,-1) and +(-0.4,1) .. (0,-1.25);
\end{scope}}
\begin{scope} [xshift=26cm,yshift=-3cm]
\draw (14,2) ..controls +(2,1) and +(-1.2,0) .. (18,1.25);
\draw (14,-2) ..controls +(2,-1) and +(-1.2,0) .. (18,-1.25);
\draw (12,1.25) ..controls +(0.5,0) and +(-1,-0.5) .. (14,2);
\draw (12,-1.25) ..controls +(0.5,0) and +(-1,0.5) .. (14,-2);
\end{scope}
\begin{scope} [xshift=26cm,yshift=9cm]
\draw (14,2) ..controls +(2,1) and +(-1,0) .. (18,2);
\draw (14,-2) ..controls +(2,-1) and +(-1,0) .. (18,-2);
\draw (12,1.25) ..controls +(0.5,0) and +(-1,-0.5) .. (14,2);
\draw (12,-1.25) ..controls +(0.5,0) and +(-1,0.5) .. (14,-2);
\end{scope}
\begin{scope} [xshift=26cm,yshift=-9cm]
\draw (12,1.25) ..controls +(2,0) and +(-2,0) .. (18,2);
\draw (12,-1.25) ..controls +(2,0) and +(-2,0) .. (18,-2);
\end{scope}
\foreach \y in {9,-9} {
\begin{scope} [xshift=44cm,yshift=\y cm]
\foreach \s in {1,-1} {
\draw (0,2*\s) ..controls +(0.2,-0.5*\s) and +(0.2,0.5*\s) .. (0,0.5*\s);
\draw[dashed] (0,2*\s) ..controls +(-0.2,-0.5*\s) and +(-0.2,0.5*\s) .. (0,0.5*\s);}
\draw (0,0.5) arc (90:270:0.5);
\end{scope}}
\foreach \y in {-6,0,6} {
\draw (38,1.75+\y) arc (90:270:1.75);}
\foreach \s in {1,-1} {
\draw (30,1.56*\s) .. controls +(3,0) and +(-5,0) .. (38,10.25*\s);}
\foreach \y in {-9,-3,3} {
\foreach \x/\c in {37.9/red,38.2/blue} {
\begin{scope} [xshift=\x cm,yshift=\y cm,\c]
\draw (0,1.25) ..controls +(0.4,-1) and +(0.4,1) .. (0,-1.25);
\draw[dashed] (0,1.25) ..controls +(-0.4,-1) and +(-0.4,1) .. (0,-1.25);
\end{scope}}}
\end{scope}
\end{tikzpicture}
\end{center} \caption{Heegaard diagram for $C_{i-1}\cup C_i$} \label{fig:StandardDiagram}
\end{figure}

\begin{proposition} \label{prop:diagrams}
 A multisection diagram determines a unique compact $4$--manifold up to diffeomorphism.
\end{proposition}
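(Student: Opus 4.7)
The plan is to follow the three-step strategy of the proof of Theorem~\ref{th:LP}: first promote the combinatorial data in the diagram to a diffeomorphism between the $3$--dimensional compression bodies $C_i$, then extend across each $4$--dimensional block $X_i$ using the relative Laudenbach--Po\'enaru theorem (Theorem~\ref{th:LPrel}), and finally check that the pieces glue. Let $X=\cup_{i=1}^nX_i$ and $X'=\cup_{i=1}^nX_i'$ be two multisected $4$--manifolds sharing the diagram $(\Sigma;\alpha_1,\dots,\alpha_n)$, so that by assumption there is a diffeomorphism $\sigma\colon\Sigma\to\Sigma'$ with $\sigma(\alpha_i)=\alpha_i'$ for every $i$.

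First I would extend $\sigma$ to diffeomorphisms $c_i\colon C_i\to C_i'$ between the corresponding $3$--dimensional compression bodies. Since $C_i$ is recovered from $\Sigma$ by attaching $2$--handles along $\alpha_i$ and capping any resulting $S^2$ boundary component with a $3$--ball, a standard argument extends $\sigma$ across the cores of those $2$--handles (any diffeomorphism of $S^1$ extends to $D^2$) and then across the remaining product $\partial_-C_i\times I$, yielding the required $c_i$. Gluing $c_{i-1}$ and $c_i$ along $\sigma$ then produces, for each~$i$, a diffeomorphism $\phi_i\colon\partial_+X_i\to\partial_+X_i'$ of the double compression body $\partial_+X_i=C_{i-1}\cup_\Sigma C_i$.

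Next I would extend each $\phi_i$ to the $4$--dimensional block. Because $X_i$ and $X_i'$ are both $P$--based hyper compression bodies of the same genus (read off the diagram), they are abstractly diffeomorphic via some $\Psi_i\colon X_i\to X_i'$. The map $\phi_i\circ(\Psi_i|_{\partial_+X_i})^{-1}$ is then a self-diffeomorphism of $\partial_+X_i'$, which by Theorem~\ref{th:LPrel} (applicable because Definition~\ref{def:Multisection} forbids $S^2$ components in $P$) extends to a self-diffeomorphism $\Xi_i$ of $X_i'$; setting $\Phi_i:=\Xi_i\circ\Psi_i$ gives $\Phi_i|_{\partial_+X_i}=\phi_i$. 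Finally the $\Phi_i$'s glue into a diffeomorphism $\Phi\colon X\to X'$: for $|i-j|>1$ they all restrict to $\sigma$ on $X_i\cap X_j=\Sigma$, and for $j=i+1$ both $\Phi_i$ and $\Phi_{i+1}$ restrict to $c_i$ on $X_i\cap X_{i+1}=C_i$.

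The key step, and the main potential obstacle, is the extension from $\phi_i$ to $\Phi_i$: this is exactly where Theorem~\ref{th:LPrel} is invoked, and where the hypothesis that $P$ contains no $S^2$ is essential. The promised failure of Theorem~\ref{th:LPrel} in the presence of an $S^2$ component of $P$ is precisely what prevents the proposition from extending beyond this setting.
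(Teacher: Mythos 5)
Your argument is correct and follows essentially the same route as the paper's proof: extend the diffeomorphism of $\Sigma$ across the defining disk systems, then to the $3$--dimensional compression bodies $C_i$, and finally across each $4$--dimensional piece via Theorem~\ref{th:LPrel}. Your write-up is in fact somewhat more explicit than the paper's (e.g.\ the reduction of the extension over $X_i$ to the self-diffeomorphism statement of Theorem~\ref{th:LPrel} via an abstract identification $\Psi_i$, and the check that the $\Phi_i$ agree on the overlaps), but the substance is identical.
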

\begin{proof}
 Let $X$ and $X'$ be multisected $4$--manifolds with diffeomorphic multisection diagrams $(\Sigma;\alpha_1,\dots,\alpha_n)$ and $(\Sigma';\alpha'_1,\dots,\alpha'_n)$, meaning that there is a diffeomorphism $\varphi:\Sigma\to\Sigma'$ such that $\varphi(\alpha_i)=\alpha'_i$. First extend $\varphi$ along defining disk systems for the $3$--dimensional compression bodies of the multisection. Then extend it to the whole compression bodies (which amounts to extending a diffeomorphism from $S^2$ to $B^3$ or from $P\times\{1\}$ to $P\times I$). Finally extend $\varphi$ to the $4$--dimensional compression bodies using Theorem~\ref{th:LPrel}.
\end{proof}

\subsection{The bad case}

In this section, we discuss the failure of Proposition~\ref{prop:diagrams} in the case when the surface~$P$ is allowed to contain some $2$--spheres. Accordingly, we allow $2$--spheres in the negative boundary of a $3$--dimensional compression body. 
We will analyse an example pointed out by David Gay. 

Start with a torus $\Sigma=S^1\times S^1$ and three parallel essential simple closed curves $\alpha_i\subset\Sigma$. Form the product $\Sigma\times\Delta$ where $\Delta$ is a $2$--disk, see Figure~\ref{figXn}. Choose three distinct points $p_i\in\partial\Delta$, $i=1,2,3$, and glue a $4$--dimensional $2$--handle $D_i\times D^2$ along $\alpha_i\times\{p_i\}$ for each $i$. More precisely glue the handle along $A_i\times[x_i,y_i]$ where $A_i$ is a closed tubular neighborhood of $\alpha_i$ in $\Sigma$ and $[x_i,y_i]$ an interval around $p_i$ in~$\partial\Delta$, so that $D_i\times\{p_i\}$ is the core of the handle, see Figure~\ref{figXn} for the order of the points on the circle. It remains to glue some $3$--handles.

\begin{figure}[htb] 
\begin{center}
\begin{tikzpicture}
\begin{scope} [scale=0.7]
 \foreach \s in {-1,1}
 \draw (0,0) .. controls +(0,\s) and +(-1,0) .. (3,1.5*\s) .. controls +(1,0) and +(0,\s) .. (6,0);
 \draw (2,0) ..controls +(0.5,-0.25) and +(-0.5,-0.25) .. (4,0);
 \draw (2.3,-0.1) ..controls +(0.6,0.2) and +(-0.6,0.2) .. (3.7,-0.1);
 \draw[blue] (3,-0.2) ..controls +(0.2,-0.5) and +(0.2,0.5) .. (3,-1.5);
 \draw[dashed,blue] (3,-0.2) ..controls +(-0.2,-0.5) and +(-0.2,0.5) .. (3,-1.5)node[below] {$\scriptstyle{\alpha_2}$};
 \draw[red] (0.85,1) node[above] {$\scriptstyle{\alpha_1}$} .. controls +(0.5,0) and +(-0.3,0.5) .. (2.5,-0.03);
 \draw[red,dashed] (0.85,1) .. controls +(0.3,-0.5) and +(-0.5,0.1) .. (2.5,-0.03);
 \draw[green] (5.15,1) node[above] {$\scriptstyle{\alpha_3}$} .. controls +(-0.5,0) and +(0.3,0.5) .. (3.5,-0.03);
 \draw[green,dashed] (5.15,1) .. controls +(-0.3,-0.5) and +(0.5,0.1) .. (3.5,-0.03);
 \draw (3,0.8) node {$\scriptstyle{\widetilde\Sigma_1}$};
 \draw (1.5,-0.4) node {$\scriptstyle{\widetilde\Sigma_2}$};
 \draw (4.5,-0.4) node {$\scriptstyle{\widetilde\Sigma_3}$};
\end{scope}
\begin{scope} [xshift=8cm]
 \draw (0,0) node {$\scriptstyle{\Sigma\times\Delta}$} circle (1) circle (2);
 \foreach \t/\i in {150/1,270/2,30/3} {
 \foreach \s in {-25,25}
 \draw[rotate=\t+\s] (1,0) -- (2,0);
 \draw[rotate=\t] (1.5,0) node {$\scriptstyle{D_\i\times D^2}$};
 \draw[rotate=\t] (1,0) node {$\scriptscriptstyle{\bullet}$} (0.75,0) node {$\scriptstyle{p_\i}$};
 \draw[rotate=\t-25] (1,0) node {$\scriptscriptstyle{\bullet}$} (0.75,0) node {$\scriptstyle{x_\i}$};
 \draw[rotate=\t+25] (1,0) node {$\scriptscriptstyle{\bullet}$} (0.75,0) node {$\scriptstyle{y_\i}$};
 \draw[rotate=\t-60] (1.5,0) node {$\scriptstyle{B_\i}$};
 }
\end{scope}
\end{tikzpicture}
\caption{Decomposition of the surface $\Sigma$ and schematic of the construction of the manifolds $X_n$} \label{figXn}
\end{center}
\end{figure}
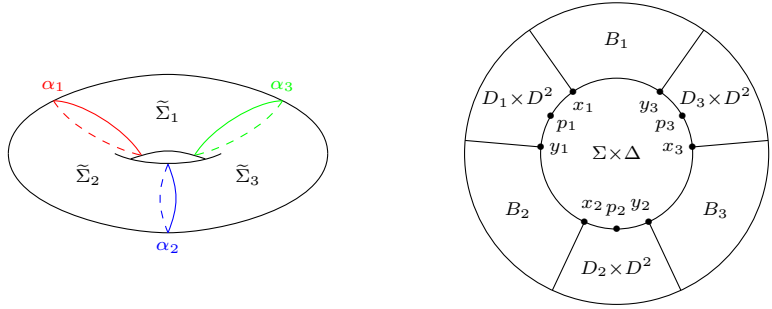

Define a foliation of $\Sigma$ by simple closed curves $\alpha_t$, $t\in[0,3]$, such that, for $i=1,2,3$, $\alpha_i$ is the curve previously defined, and $\alpha_0=\alpha_3$.
From now on, the indices $i$ are considered modulo $3$. For each $i$, let $t\in[0,1]\mapsto q_i(t)\in\partial\Delta$ be an injective path from $y_{i-1}$ to $x_i$, chosen so that $q_1$, $q_2$ and $q_3$ are pairwise disjoint. 
Set $\Sigma_i=\cup_{t\in[0,1]}\{q_i(t)\}\times\alpha_{t+i-1}$ (Figure~\ref{figXn} represents the projection $\widetilde\Sigma_i$ of $\Sigma_i$ on $\Sigma$).
Now let $D_i^x$ be a disk parallel to $D_i$ on the boundary of the $2$--handle and attached to $\alpha_i\times\{x_i\}$. Similarly define $D_{i-1}^y$. For $i=1,2$, glue a $3$--handle $B_i$ along $D_{i-1}^y-D_i^x+\Sigma_i$. 

In the gluing of $B_3$, we shall give more flexibility in order to produce a family of distinct manifolds. Fix an integer $n\geq0$.
Define a path $q_3^n(t)$ from $y_2$ to $x_3$ in $\partial\Delta$ as a concatenation of $q_3$ and $n-1$ full turns around $\Delta$. 
Set $\Sigma_3^n=\cup_{t\in[0,1]}\{q_3^n(t)\}\times\alpha_{t+2}$, and glue a $3$--handle $B_3$ along $D_2^y-D_3^x+\Sigma_3^n$.

We claim that this defines trisected manifolds with boundary $X_n$ sharing a common trisection diagram, namely that of Figure~\ref{figXn}. Since they are constructed by gluing handles, it is easy to compute their homology. We get $H_2(X_n)\cong\Z/n\Z$, so that the $X_n$ are non-homeomorphic manifolds.

This failure of Proposition~\ref{prop:diagrams} can be analysed as follows. The $3$--dimensional pieces of the trisections we constructed are punctured solid tori, say $C_i$. These are non-irreducible, and each curve $\alpha_i$ on their positive boundary bounds a family of pairwise non-isotopic properly embedded disks indexed by~$\Z$. This implies that they admit diffeomorphisms that restrict to the identity on the positive boundary, but are non-isotopic to the identity. The non-existence of such diffeomorphisms was a key point in our proof of the relative Laudenbach--Poénaru theorem. One can check that the $X_n$ are related by the following move. Cut $X_n$ along one of the $C_i\setminus\partial_+C_i$ and reglue {\em via} a diffeomorphism of $C_i$ that restricts to the identity on the positive boundary, but is non-isotopic to the identity.

\def\cprime{$'$}
\providecommand{\bysame}{\leavevmode\hbox to3em{\hrulefill}\thinspace}
\providecommand{\MR}{\relax\ifhmode\unskip\space\fi MR }
\providecommand{\MRhref}[2]{%
  \href{http://www.ams.org/mathscinet-getitem?mr=#1}{#2}
}
\providecommand{\href}[2]{#2}

\end{document}